\numberwithin{equation}{section}
\newtheorem{theorem}{Theorem}[section]
\newtheorem{corollary}[theorem]{Corollary}
\newtheorem{lemma}[theorem]{Lemma}
\newtheorem{additio}[theorem]{Additio}
\newtheorem{proposition}[theorem]{Proposition}
\theoremstyle{definition}
\newtheorem{definition}[theorem]{Definition}
\theoremstyle{remark}
\newtheorem{remark}[theorem]{Remark}
\newcommand{\F}{\mathbb{F}}
\newcommand{\N}{\mathbb{N}}
\newcommand{\R}{\mathbb{R}}
\newcommand{\Z}{\mathbb{Z}}
\DeclareMathOperator{\Alt}{Alt}
\DeclareMathOperator{\Aut}{Aut}
\DeclareMathOperator{\id}{id}
\DeclareMathOperator{\ord}{ord}
\DeclareMathOperator{\RiSt}{RiSt}
\DeclareMathOperator{\SO}{SO}
\DeclareMathOperator{\len}{len}
\DeclareMathOperator{\St}{St}
\DeclareMathOperator{\Sym}{Sym}
\newcommand{\abs}[1]{\vert #1 \vert}
\newcommand{\T}{\mathcal{T}}
\newcommand{\defeq}{\mathrel{\mathop{:}}=}
\renewcommand{\epsilon}{\varepsilon}
\title[Residually finite groups as subgroups of branch groups]{Realizing residually finite groups as subgroups of branch groups}
\author[S. Kionke]{Steffen Kionke}
\author[E. Schesler]{Eduard Schesler}
\address{FernUniversit\"at in Hagen \\ Fakult\"at f\"ur Mathematik und Informatik \\
58084 Hagen}
\email{steffen.kionke@fernuni-hagen.de}
\email{eduard.schesler@fernuni-hagen.de}
\thanks{Funded by the Deutsche Forschungsgemeinschaft (DFG, German Research Foundation) - 441848266}
\subjclass[2010]{Primary 20E08; Secondary 43A07, 20E18, 20E26}
\keywords{amenable, torsion group, branch group}
\begin{document}
\begin{abstract}
We prove that every finitely generated, residually finite group $G$ embeds into a finitely generated perfect branch group $\Gamma$ such that many properties of $G$ are preserved under this embedding.
Among those are the properties of being torsion, being amenable, and not containing a non-abelian free group.
As an application we construct a finitely generated, non-amenable torsion branch group.
\end{abstract}
\maketitle

\section{Introduction}

The history of amenable groups goes back to 1914 when Hausdorff~\cite[Appendix]{Hausdorff14} discovered that the unit sphere $S \subseteq \R^3$ has a countable subset $F$ whose complement $S' = S \backslash F$ admits a paradoxical decomposition.
More precisely, he showed that $S'$ decomposes as a union of $3$ disjoint subsets $A, B, C$ such that $A,B,C$, and $B \cup C$ are congruent.
This result, known as the Hausdorff paradox and precursor of the Banach-Tarksi paradox~\cite{BanachTarski24}, clearly implies that there is no finitely additive probability measure on $S$ that is defined on all of its subsets and assigns the same measure to any two congruent subsets.
The notion of amenability arose from von Neumann's~\cite{vNeumann29} insight that the source of these paradoxes can be traced back to the existence of non-abelian free subgroups in $\SO(3)$.
Regarding the general setting in which a group $G$ acts freely on a set $X$, he showed that the existence of non-abelian free subgroups in $G$ gives rise to a paradoxical decomposition of $X$.
This insight led von Neumann to initiate the study of amenable\footnote{Von Neumann designated such group with the German word ``messbar'' (``measurable'' in English).
The fact that such groups are now called amenable goes back to Day~\cite{Day49}, apparently as a pun.} groups, i.e.\ groups that admit a finitely additive, left invariant, probability measure defined on all of its subsets.
It follows that the class $\mathcal{AG}$ of amenable groups is contained in the class $\mathcal{NF}$ of groups that do not contain non-abelian free subgroups.
The question whether the classes $\mathcal{AG}$ and $\mathcal{NF}$ coincide is known as the von Neumann problem and has its first written appearance in work of Day~\cite{Day57}.
In the same article Day introduced a rich source of amenable groups by defining the class $\mathcal{EG}$ of elementary amenable groups as the smallest one that contains all finite and abelian groups, and that is closed under taking subgroups, extensions, quotients and direct limits.
Since all groups in $\mathcal{NF}$ known at that time were elementary amenable, Day extended von Neumann's problem by asking whether the inclusion $\mathcal{EG} \subseteq \mathcal{AG}$ is proper.
We refer to this question as Day's problem.
Meanwhile von Neumann's problem and Day's problem were shown to have negative solutions.
In the case of von Neumann's problem, the first counterexample goes back to Ol’shanskii~\cite{Olshanskii80}, who constructed finitely generated, non-amenable groups of bounded exponent.
Note that the latter condition clearly prevents the existence of non-abelian free subgroups.
Another source of counterexample was provided by Ershov~\cite{Ershov08}.
Like Ol’shanskii's examples, the groups constructed by Ershov have their origin in the solution of the Burnside problem.
In fact, Ershov's construction is based on Golod-Shafarevich groups, which are the first known finitely generated, infinite torsion groups.
However, in contrast to Ol’shanskii's examples, which can be chosen to be simple, Ershov's groups are residually finite.
This is essential for us as it allows us to embed Ershov's groups into torsion branch groups; see Subsection~\ref{subsec:branch-groups}.

Branch groups have their origin in the discovery of Grigorchuk's groups~\cite{Grigorchuk80}, yet another completely different type of finitely generated, infinite torsion groups, which provided the first counterexample to Day's problem.
More precisely, Grigorchuk showed that his groups have intermediate word growth, which was shown by Chou~\cite{Chou80} to be impossible for finitely generated elementary amenable groups.
In fact, it turned out that counterexamples to Day's problem are ubiquitous in the class of branch groups, see e.g. the examples discussed in~\cite{JuschenkoNekrashevychdelaSalle16}.
However, there are also finitely generated branch groups known that are non-amenable, see e.g.~\cite{SidkiWilson03} and~\cite{KionkeSchesler21} for examples of branch groups that contain non-abelian free groups.
It is a natural question, raised by Bartholdi, Grigorchuk, and \v{S}uni\'{k}, whether the class of branch groups also contains counterexamples to von Neumann's problem, see e.g.~\cite[Question 20]{BGS-branch},~\cite[Problem 9.1. a)]{Grigorchuk05}, and~\cite[15.14. b)]{KhukhroMazurov14}.
By applying the following embedding theorem on Ershov's groups discussed above, we will see that this is indeed the case.

\begin{theorem}\label{thm:intro-main}
Every finitely generated, residually finite group $G$ embeds into a finitely generated perfect branch group $\Gamma$ such that
\begin{enumerate}
\item\label{it-torsion} if $G$ is torsion, then $\Gamma$ is torsion,
\item\label{it-amenable} if $G$ is amenable, then $\Gamma$ is amenable,
\item\label{it-free} if $G$ does not contain a non-abelian free group, then $\Gamma$ does not contain a non-abelian free group,
\item\label{it-law} if $G$ satisfies the law $w$ up to powers, then $\Gamma$ satisfies $w$ up to powers.
\end{enumerate}
\end{theorem}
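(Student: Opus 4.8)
The plan is to use residual finiteness to make $G$ act faithfully on a rooted tree, and then to enlarge $G$, by adjoining finitely many further tree automorphisms, to a branch group $\Gamma$ whose action is ``self-similar over $G$'': every section of an element of $\Gamma$ should again be an element of $\Gamma$ governed only by $G$ and by finite permutation groups. Concretely, since $G$ is finitely generated and residually finite, I would fix a descending chain $G = G_0 \trianglerighteq G_1 \trianglerighteq \cdots$ of finite-index normal subgroups with $\bigcap_n G_n = 1$; the associated coset tree $T$ (with $n$-th level $G/G_n$) then carries a faithful $G$-action, and after refining and padding the chain I may assume that $T$ is $d$-regular for a convenient $d \ge 2$ and that the action has whatever extra regularity the construction needs. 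As it stands $G \le \Aut(T)$ is typically very far from branch, since its rigid vertex stabilizers can be trivial; the point is to repair this.

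I would then define $\Gamma \le \Aut(T)$, possibly after passing to a tree refining $T$, to be generated by a finite generating set of $G$ acting near the root, together with finitely many automorphisms that \emph{propagate} copies of a fixed finite-index subgroup $H \le G$ down the tree, in the spirit of the recursive definitions of the Grigorchuk and Gupta--Sidki groups. The design goals are: (i) $\Gamma$ is spherically transitive; (ii) for every vertex $v$ the rigid stabilizer $\rist_\Gamma(v)$ contains a ``pushed down'' conjugate of $H$, so that $\RiSt_\Gamma(n)$ has finite index in $\Gamma$ for all $n$, i.e. $\Gamma$ is branch; (iii) the self-similar structure is \emph{contracting}, so that the section $g|_v$ of an element $g \in \Gamma$ at a vertex $v$ lies in $\Gamma$ and is assembled from data no more complex than that defining $g$; and (iv) the generators can be chosen inside $[\Gamma,\Gamma]$, or one can pass to a suitable perfect branch subgroup still containing $G$, so that $\Gamma$ is perfect. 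Producing a \emph{finite} generating set for which (i)--(iv) hold simultaneously is the technical heart of the statement, and the step I expect to be the main obstacle: there is real tension between enlarging the rigid stabilizers (needed for branchness), keeping the group finitely generated, and keeping the section structure tight enough to transport properties such as torsion.

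Granting such a $\Gamma$, the embedding $G \hookrightarrow \Gamma$ is built in, and perfectness and the branch property come from (iv), (i) and (ii). For items~\eqref{it-amenable}, \eqref{it-free} and \eqref{it-law} I would exploit (iii): each level stabilizer $\St_\Gamma(n)$ has finite index in $\Gamma$ and, via its section map, embeds into a finite direct power of groups obtained from $G$ by finitely many of the operations ``pass to a subgroup'', ``form a finite extension'', and ``form a restricted wreath product with a finite group''. Since the class of amenable groups, the class of groups without non-abelian free subgroups, and the class of groups satisfying $w$ up to powers are each closed under these operations and under passage to finite-index overgroups, the corresponding property passes from $G$ to $\Gamma$. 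Item~\eqref{it-torsion} needs more, because a product of torsion elements of a general group need not be torsion: here I would imitate Grigorchuk's proof that his first group is a torsion group, arguing by induction on word length. If $g \in \Gamma$ fixes the first level, property (iii) makes the sections of $g$ strictly shorter and still in $\Gamma$; if $g$ does not fix the first level, a bounded power of $g$ does; and since, when $G$ is torsion, the generators of $\Gamma$ and all their sections are torsion elements of $\Gamma$, the induction closes and every element of $\Gamma$ has finite order.
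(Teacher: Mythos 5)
Your outline correctly identifies the shape of the problem, but the two places where you defer or wave are exactly where the content of the theorem lives, and one of your reduction steps is false as stated. First, the ``technical heart'' you acknowledge leaving open --- producing finitely many automorphisms that simultaneously give spherical transitivity, large rigid stabilizers, perfectness, and a contracting section structure --- is the actual proof. The paper resolves it not by a single clever recursion but by a parametrized family: it takes $Q=\Alt(5)$, embeds $G/N_i$ into $A_i=\Alt(2n_i+3)$ so that the $G/N_i$-conjugates of $\Alt(5)$ generate $A_i$, lets $A_i$ act as rooted automorphisms on the alphabet $A_i/\langle\sigma_i\rangle$, and adjoins two \emph{directed} generators spreading $Q$ and $G$ along rays $\alpha,\beta$ in the tree. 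Branchness and perfectness then come from a commutator trick using that $Q$ is perfect and that $A_j$ is generated by $G_j$-conjugates of $Q_j$ (giving $\Gamma_j\cong\Gamma_{j+1}\wr_{X_j}A_j$). The contraction property you need (called ``shrinking'' in the paper) does \emph{not} hold for all choices of $\alpha,\beta$; it is obtained only generically, by a counting estimate on how many choices of $(\alpha_j,\beta_j)$ can fail to shorten a given word, combined with Baire's category theorem over the compact parameter space --- and this requires the alphabets to grow fast relative to the maximal element order in $A_j$, which is why the alternating groups are taken of degree growing with $|G/N_i|$. None of this is recoverable from ``imitate Grigorchuk/Gupta--Sidki''; for a fixed recursion in the style of those examples there is no reason the sections should reduce to elements of $G$.

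Second, your argument for item~\eqref{it-amenable} would fail. The level stabilizer $\St_\Gamma(n)$ does \emph{not} embed into a finite direct power of groups obtained from $G$ by finitely many subgroup/extension/wreath operations: by the branch property its image under the section map contains a full direct power of the next-level group $\Gamma_{n+1}$, which is again an infinite branch group of the same kind, so the recursion never terminates in terms of $G$. What the contraction actually yields is that suitable \emph{powers} of any finite list of elements generate a subgroup of some $G^t$ (``locally amenable up to powers''), and this does not imply amenability --- every torsion group is locally amenable up to powers (take powers equal to the identity), yet Corollary~\ref{cor:intro-main} produces a non-amenable torsion group by this very construction. The paper therefore has to invoke a genuinely stronger input for \eqref{it-amenable}, namely the theorem of Juschenko, Nekrashevych and de la Salle that amenability of the germs of the action (here trivial or $Q\times G$) implies amenability of the group. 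By contrast, your mechanism, once corrected to ``powers of finitely many elements land in a finite power of $G$'', is essentially the paper's route for \eqref{it-torsion}, \eqref{it-free} and \eqref{it-law}, since those three properties are faithfully captured by the ``locally $\mathfrak{C}$ up to powers'' formalism.
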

A group $\Gamma$ is said to satisfy a law $w$ \emph{up to powers}, if for all $\gamma_1,\dots, \gamma_r$, there are exponents $t_1,\dots,t_r \in \N$ such that $\langle\gamma_1^{t_1},\dots,\gamma_r^{t_r}\rangle$ satisfies $w$.
As an example, we note that every law $w$ that is satisfied in a finite index subgroup, is a law up to powers.

\begin{corollary}\label{cor:intro-main}
There is a finitely generated, non-amenable torsion branch group $\Gamma$.
In particular, the von Neumann problem has a negative solution in the class of finitely generated branch groups.
\end{corollary}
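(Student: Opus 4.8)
The plan is to feed a well-chosen input group into Theorem~\ref{thm:intro-main}. The group we want is one of Ershov's groups from~\cite{Ershov08}: a finitely generated, residually finite, \emph{non-amenable} torsion group. Such groups exist because Ershov's construction produces finitely generated, residually finite Golod--Shafarevich torsion groups that fail to be amenable (for instance, Ershov's examples can be taken to have Kazhdan's property $(T)$, and an infinite group with property $(T)$ is non-amenable). Fix such a group $G$.

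Now apply Theorem~\ref{thm:intro-main} to $G$. This yields an embedding $G \hookrightarrow \Gamma$ into a finitely generated perfect branch group $\Gamma$. Since $G$ is torsion, part~\eqref{it-torsion} of the theorem gives that $\Gamma$ is torsion. For non-amenability we argue by contradiction: amenability passes to subgroups, so if $\Gamma$ were amenable, then $G$, being isomorphic to a subgroup of $\Gamma$, would be amenable, contradicting the choice of $G$. Hence $\Gamma$ is a finitely generated, non-amenable torsion branch group, which is the first assertion. (Equivalently, one may invoke part~\eqref{it-amenable} in its contrapositive form.)

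For the ``in particular'' clause, recall that a non-abelian free group is torsion-free, so a torsion group contains no non-abelian free subgroup. Thus $\Gamma$ lies in the class $\mathcal{NF}$ of groups without non-abelian free subgroups but not in the class $\mathcal{AG}$ of amenable groups, and $\Gamma$ is a finitely generated branch group; this is exactly a negative answer to the von Neumann problem within the class of finitely generated branch groups.

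Essentially all the work here is external to the corollary: it rests entirely on Theorem~\ref{thm:intro-main} together with Ershov's theorem that finitely generated, residually finite, non-amenable torsion groups exist. The genuine obstacle is therefore the proof of Theorem~\ref{thm:intro-main} itself --- realizing an arbitrary finitely generated residually finite $G$ inside a \emph{perfect branch} overgroup while simultaneously preserving the torsion and (non-)amenability behaviour --- and once that theorem is available the corollary is immediate.
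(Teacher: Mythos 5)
Your argument is correct and matches the paper's proof essentially verbatim: both take Ershov's finitely generated, residually finite, non-amenable torsion group, feed it into Theorem~\ref{thm:intro-main} to obtain a torsion branch overgroup $\Gamma$, and conclude non-amenability of $\Gamma$ from the fact that amenability passes to subgroups. The added remark that torsion groups contain no non-abelian free subgroups correctly justifies the ``in particular'' clause.
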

\begin{proof}
It was proven by Ershov~\cite{Ershov08} that there exists a finitely generated, residually finite, non-amenable torsion group $G$.
We can apply Theorem~\ref{thm:intro-main} to embed $G$ into a finitely generated, torsion branch group $\Gamma$.
Now the corollary follows from the easy fact that groups containing non-amenable groups are non-amenable.
\end{proof}

Recall that an action of a group $G$ on a set $X$ is called amenable if $X$ admits a $G$-invariant probability measure defined on all of its subsets.
It was asked by Greenleaf~\cite{Greenleaf69} whether the existence of an amenable action of a group $G$ implies that $G$ is amenable.
Of course, to avoid easy counterexamples, one should add the assumption that the action is faithful and transitive.
But even in this formulation the question was answered in the negative by von Douwen~\cite{vanDouwen90}, who constructed such an action for the non-abelian free group of rank $2$.
Since then, the study of amenable actions of non-amenable groups gained quite some interest and many other examples were found, see e.g.~\cite{MonodPopa03},~\cite{GlasnerMonod07}, and~\cite{GrigorchukNekrashevych07}.
To the best of our knowledge, all non-amenable groups admitting such an action that appear in the literature so far contain non-abelian free groups.
The following Corollary, which will be deduced by applying a result of Grigorchuk and Nekrashevych~\cite{GrigorchukNekrashevych07} to the group $\Gamma$ in Corollary~\ref{cor:intro-main}, tells us that this is not necessarily the case.

\begin{corollary}\label{cor:amenable-action}
The group $\Gamma$ in Corollary~\ref{cor:intro-main} admits a continuous action on a Cantor set $\mathfrak{C}$ such that the restriction of this action to each orbit is amenable and faithful.
\end{corollary}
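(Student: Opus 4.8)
The plan is to take for $\mathfrak{C}$ the boundary $\partial T$ of the rooted tree $T$ underlying the branch action of $\Gamma$. Recall from the proof of Theorem~\ref{thm:intro-main} that $\Gamma$ is a finitely generated branch group, hence acts faithfully and level-transitively on a spherically homogeneous rooted tree $T$ all of whose valencies are at least two, with the rigid level stabilizers $\RiSt_\Gamma(n)$ of finite index in $\Gamma$. The boundary $\mathfrak{C} = \partial T$, equipped with its usual topology generated by the cylinder sets, is then a Cantor set on which $\Gamma$ acts by homeomorphisms, and it remains to show that the restriction of this action to each orbit is faithful and amenable.

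For faithfulness, fix $\xi \in \partial T$. Since $\Gamma$ acts level-transitively on $T$, the orbit $\Gamma\xi$ is dense in $\partial T$. If $g \in \Gamma$ fixes $\Gamma\xi$ pointwise, then $g$ fixes a dense subset of the Hausdorff space $\partial T$ and acts by homeomorphisms, so $g$ fixes all of $\partial T$ pointwise; but an automorphism of a leafless rooted tree is determined by its action on the boundary (it must preserve each cylinder set $\partial T_v$ and hence fix every vertex $v$), so $g$ acts trivially on $T$ and $g = 1$ by faithfulness. Thus the restriction of the $\Gamma$-action to each orbit $\Gamma\xi$ is faithful.

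For amenability, I would invoke the theorem of Grigorchuk and Nekrashevych~\cite{GrigorchukNekrashevych07} asserting that the orbital Schreier graphs of a finitely generated branch group acting on its tree are amenable; equivalently, for each $\xi \in \partial T$ the point stabilizer $\St_\Gamma(\xi)$ is co-amenable in $\Gamma$, so that the transitive action of $\Gamma$ on $\Gamma\xi \cong \Gamma/\St_\Gamma(\xi)$ admits a $\Gamma$-invariant mean and is therefore amenable in the sense recalled in the introduction. Since the group $\Gamma$ furnished by Corollary~\ref{cor:intro-main} is a finitely generated branch group, this applies directly and finishes the proof.

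The only point demanding attention is that $\Gamma$ meets the precise hypotheses under which the Grigorchuk--Nekrashevych amenability statement is established, namely that it is a finitely generated group acting as a branch group on a rooted tree; this, however, has already been secured by the construction behind Theorem~\ref{thm:intro-main}. The remaining ingredients — that $\partial T$ is a Cantor set, continuity of the action, and the density argument for faithfulness — are routine, so I do not anticipate any serious obstacle.
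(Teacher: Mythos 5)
Your choice of $\mathfrak{C} = \partial\T_1$, the continuity of the action, and the faithfulness argument (each orbit is dense by level-transitivity, and a tree automorphism acting trivially on a dense subset of the boundary fixes every vertex) are all fine and consistent with what the paper does. The gap is in the amenability step: the theorem of Grigorchuk and Nekrashevych that you invoke is not a statement about branch groups. Its hypothesis is that $\Gamma$ is a finitely generated group of \emph{bounded} tree automorphisms, i.e.\ that every element has a uniformly bounded number of non-trivial sections on each level of the tree. Being a finitely generated branch group is neither the stated hypothesis nor a substitute for it; amenability of the orbital Schreier graphs on the boundary is not a general feature of branch groups, and if it were, Corollary~\ref{cor:amenable-action} would follow from Corollary~\ref{cor:intro-main} with no further argument at all.

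The missing step --- which is in fact the entire content of the paper's proof --- is the verification that $\Gamma = \Gamma_1^{\alpha,\beta}$ satisfies this boundedness hypothesis. It uses the specific shape of the generating set: the rooted automorphisms in $A_1$ have only trivial sections below the first level, and each directed generator $\tilde{q}^\alpha_{[1]}$ or $\tilde{g}^\beta_{[1]}$ has at most two non-trivial sections on every level (one at the vertex $o\cdots o$ on the spine and one at the neighbour determined by $\alpha_j$ resp.\ $\beta_j$). Since the number of non-trivial level-$n$ sections of a product is at most the sum over its factors, every $\gamma \in \Gamma_1^{\alpha,\beta}$ has a number of non-trivial sections on each level bounded by twice its word length in these generators, uniformly over all levels. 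Only after this check can the Grigorchuk--Nekrashevych theorem be applied; without it the citation does not support the conclusion, so you should add this verification to your argument.
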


Let $\Gamma$ be a branch group as in Corollary~\ref{cor:intro-main}.
Using the fact that every proper quotient of a branch group is virtually abelian~\cite[Proposition 6]{DelzantGrigorchuk08}, it follows that $\Gamma$ is \emph{just-infinite}, i.e.\ an infinite group all of its proper quotients are finite.
In particular, every proper quotient of $\Gamma$ is amenable, which makes $\Gamma$ a minimal counterexample to von Neumann's problem.
Interestingly, the same reasoning applies for variations of the other two types of torsion groups discussed above.
In fact, the examples constructed by Ol’shanskii are Tarski monsters, which makes them simple and hence just-infinite.
A just-infinite variation of Golod-Shafarevich groups was constructed by Ershov and Jaikin-Zapirain~\cite{ErshovJaikin10}.
The groups they constructed are residually finite torsion groups with property $(\mathrm{T})$, from which they can be easily seen to provide counterexamples to von Neumann's problem.
Moreover, the groups of Ershov and Jaikin-Zapirain are hereditarily just-infinite, which means that they are residually finite and all of their finite index subgroups are just-infinite.
Recall from Wilson's celebrated classification of just-infinite groups that every just-infinite group is either a branch group or virtually a direct product of finitely many isomorphic copies of a group that is either simple or hereditarily just-infinite.
In view of this classification and the just mentioned results, we see that Corollary~\ref{cor:intro-main} provides us with the last piece needed to get the following.

\begin{corollary}\label{cor:just-infinite}
Each of the $3$ classes of Wilson's classification of finitely generated, just-infinite groups contains a counterexample to von Neumann's problem.
\end{corollary}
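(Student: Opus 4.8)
The plan is purely one of assembly: for each of the three classes appearing in Wilson's trichotomy I would point to a finitely generated group in that class which is non-amenable and contains no non-abelian free subgroup. Recall that Wilson's classification of finitely generated just-infinite groups splits them into (i) branch groups, (ii) groups that are virtually a direct product of finitely many isomorphic copies of a hereditarily just-infinite group, and (iii) groups that are virtually a direct product of finitely many isomorphic copies of an infinite simple group. Two elementary observations will do most of the work: a torsion group contains no non-abelian free subgroup, and an amenable group with Kazhdan's property $(\mathrm{T})$ is finite, so that an infinite group with property $(\mathrm{T})$ is automatically non-amenable.

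For class (i) I would simply quote Corollary~\ref{cor:intro-main}, which furnishes a finitely generated, non-amenable torsion branch group $\Gamma$; it contains no non-abelian free subgroup because it is torsion, and it is just-infinite since by \cite[Proposition 6]{DelzantGrigorchuk08} every proper quotient of a branch group is virtually abelian, hence amenable, hence (being also finitely generated) finite. For class (iii) I would take one of Ol'shanskii's Tarski monsters \cite{Olshanskii80}: a finitely generated infinite group all of whose proper nontrivial subgroups are cyclic of a fixed prime order; these are simple, hence just-infinite, they are non-amenable, and they have bounded exponent, hence are torsion and contain no non-abelian free subgroup. For class (ii) I would take a group $G$ from the construction of Ershov and Jaikin-Zapirain \cite{ErshovJaikin10}: a finitely generated, residually finite, hereditarily just-infinite torsion group with property $(\mathrm{T})$. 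Such a $G$ is infinite with property $(\mathrm{T})$, hence non-amenable, and it is torsion, hence contains no non-abelian free subgroup; regarding $G$ as a direct product of a single copy of itself places it in class (ii).

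Since there is genuine mathematical content only behind the three cited constructions --- Corollary~\ref{cor:intro-main}, the Tarski monsters, and the hereditarily just-infinite property $(\mathrm{T})$ groups --- there is no serious obstacle left in the argument; the only points to verify are bookkeeping ones, namely that each example is finitely generated, is just-infinite, and genuinely lies in the advertised class, together with the two elementary implications noted above. The single substantive new ingredient is the branch-group case, which is exactly the output of the main theorem of this paper.
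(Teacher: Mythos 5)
Your argument is essentially identical to the paper's: the discussion immediately preceding the corollary assembles exactly these three witnesses --- the torsion branch group of Corollary~\ref{cor:intro-main} for the branch class, Ol'shanskii's Tarski monsters \cite{Olshanskii80} for the simple class, and the hereditarily just-infinite residually finite torsion groups with property $(\mathrm{T})$ of Ershov and Jaikin-Zapirain \cite{ErshovJaikin10} for the remaining class --- so structurally there is nothing to add. One inference in your branch-group case is wrong as written: from ``every proper quotient of $\Gamma$ is virtually abelian'' you conclude ``hence amenable, hence (being also finitely generated) finite,'' but a finitely generated amenable group need not be finite ($\Z$). The correct deduction uses that $\Gamma$ is torsion: a proper quotient of $\Gamma$ is then a finitely generated, virtually abelian, torsion group, hence finite. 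With that one-line repair your proof coincides with the paper's.
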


The branch group $\Gamma$ in our main theorem is far from being unique. Our proof is based on constructing an infinite family $\Gamma^{\alpha,\beta}$ of branch groups, parametrized by $\alpha,\beta$ in a compact metric space. We then use Baire's category theorem to show that for a dense set of parameters $\Gamma^{\alpha,\beta}$ has the desired property.
Our construction provides additional flexibility. In particular, it is possible to freely choose the profinite topology induced on $G$ by the finite index subgroups of $\Gamma$.
\begin{additio}\label{additio:profinite}
In the setting of Theorem \ref{thm:intro-main}.
Let $H$ be a profinite group that contains $G$ as a dense subgroup. Then the branch group $\Gamma$ can be chosen so that the image of the induced map $\widehat{G} \to \widehat{\Gamma}$ of profinite completions is isomorphic to $H$.
\end{additio}
Given a finitely generated profinite group it can be difficult to determine whether or not it is the profinite completion of an abstract finitely generated group. For instance, $\prod_{p} \mathrm{SL}_2(\F_p)$ cannot be the profinite completion of a finitely generated group (see \cite{KassabovNikolov06}). In this regard, we think that it is somewhat surprising, that our construction shows that every finitely generated profinite group $H$ is isomorphic to the image of a homomorphism $\widehat{G}_1 \to \widehat{G}_2$ induced by an embedding of finitely generated groups.

\section{Groups acting on rooted trees}

Let us start by recalling some standard terminology in the area of groups acting on rooted trees.

\subsection{Branch groups}\label{subsec:branch-groups}

Let $X = (X_n)_{n \in \N}$ be a sequence of finite sets of cardinality $\abs{X_n} \geq 2$, which we will think of as alphabets.
For each $\ell \in \N_0$, we consider the set of words of length $\ell$ given by $X^{\ell} = X_1 \times \ldots \times X_{\ell}$, where $X^{0} = \{\emptyset\}$ is defined to be the singleton consisting of the unique word $\emptyset$ of length $0$.
The \emph{spherically homogeneous rooted tree associated to $X$}, denoted by $\T = \T_X$, is the rooted tree with vertex set $X^{\ast} = \bigcup \limits_{\ell=0}^{\infty} X^{\ell}$ and root vertex $\emptyset$, where two vertices $v,w$ are connected by an edge if and only if there is a letter $x \in X_n$ for some $n \in \N$ such that either $v = wx$ or $w = vx$.
The groups we are interested in are subgroups of the group $\Aut(\T)$ of all automorphisms of $\T$ that fix the root $\emptyset$.
Note that the distance of a vertex $v$ to the root, which we call the \emph{level} of $v$, is preserved under the action of $\Aut(\T)$ and coincides with the word length of $v$.
It therefore follows that the sets $X^{\ell}$ of vertices of level $\ell$ are stable under the action of $\Aut(\T)$, which provides us with a canonical homomorphism $\pi_{\ell} \colon \Aut(\T) \rightarrow \Sym(X^{\ell})$.
%Thus for each $\ell \in \N_0$ we have a canonical homomorphism $\pi_{\ell} \colon \Aut(\T) \rightarrow \Sym(X^{\ell})$.
Consider now a subgroup $G$ in $\Aut(\T)$.
We say that $G$ acts \emph{spherically transitively} on $\T$ if the action of $G$ on $X^{\ell}$ via $\pi_{\ell}$ is transitive for every $\ell \in \N_0$.
For each such $\ell$, the \emph{level $\ell$ stabilizer subgroup} in $G$ is defined by
\[
\St_G(\ell) \defeq \bigcap \limits_{v \in X^{\ell}} \St_G(v),
\]
where $\St_G(v)$ denotes the stabilizer of $v$ in $G$.
Note that $\St_G(\ell)$ coincides with the kernel of the restriction of $\pi_{\ell}$ to $G$, which makes it a normal subgroup of finite index in $G$.
The \emph{rigid stabilizer} of $v$ in $G$, denoted by $\RiSt_G(v)$, is the subgroup of $\St_G(v)$ consisting of those elements that fix every word that does not contain $v$ as an initial subword.
The subgroup of $\St_G(\ell)$ that is generated by the groups $\RiSt_G(v)$ with $v \in X^{\ell}$ is called the \emph{rigid level $\ell$ stabilizer subgroup} in $G$ and will be denoted by $\RiSt_G(\ell)$.
It can be easily seen that $\RiSt_G(\ell)$ is a normal subgroup of $G$.
If moreover $\RiSt_G(\ell)$ has finite index in $G$ for every $\ell \in \N$ and $G$ acts spherically transitively on $\T$, then $G$ is said to be a \emph{branch subgroup} of $\Aut(\T)$.
More generally, we say that a group $G$ is a \emph{branch group} if it is isomorphic to a branch subgroup of the automorphism group of a spherically homogeneous rooted tree.

For each $j \in \N$, let $\T_{j}$ be the spherically homogeneous rooted tree associated to the subsequence $(X_j, X_{j+1}, \ldots)$ of $X$.
Consider an element $h \in \Aut(\T_j)$.
For every vertex $u$ in $\T_j$ of level $\ell$, there is a unique automorphism $h|_u \in \Aut(\T_{j+\ell})$ that satisfies
\begin{equation}\label{eq:decomposition-rooted-and-section}
h(uv) = h(u) h|_u(v);
\end{equation}
$h|_u$ is called the \emph{section} of $h$ at $u$.
We note that $(gh)|_u = g|_{h(u)} h|_u$ for all $g,h \in \Aut(\T_j)$ and $\ ^h g|x = h|_{gh^{-1}x} g|_{h^{-1}x}h^{-1}|_x$.
In view of~\eqref{eq:decomposition-rooted-and-section} we see that every automorphism of $\T_j$ is determined by $(h|_x)_{x \in X_j}$ and its action on $X_j$.

\subsection{Stabilized sections and shrinking sequences}\label{sec:stabilized}
Let us fix two groups $Q,G$ and
 a sequence $X = (X_n)_{n \in \N}$ of finite sets of cardinality $\abs{X_n} \geq 2$ each equipped with actions of $Q$ and $G$.
Let $Q_n, G_n \subseteq \Sym(X_n)$ denote the images of $Q$ resp. $G$ in the symmetric group and let $A_n = \langle Q_n, G_n \rangle$. Throughout we will make the following assumptions for all $n \in \N$
\begin{enumerate}[({A}1)]
\item $G$ and $Q$ are finitely generated,
\item $Q$ is perfect,
\item\label{it:generation} $A_n$ is transitive and generated by the $G_n$-conjugates of $Q_n$.
\end{enumerate}
Under these assumptions $A_n$ is perfect, since it is generated by perfect groups.

We consider a number of tree automorphisms.
First, we define an action of $A_j$ on $\T_j$ by \emph{rooted} automorphisms.
Here $A_j$ permutes the subtrees hanging below the vertices in $X_j$ according to its action on $X_j$.
In other words, we define $\rho_j\colon A_j \to \Aut(\T_j)$ by setting $\rho_j(a)x_jx_{j+1}\cdots x_k = (ax_j)x_{j+1}\cdots x_k$.
Since $\rho_j$ is injective, we will identify $A_j$ with its image under $\rho_j$.
For $q \in Q$ (resp. $g \in G$) we write $q_j$ (resp. $g_j$) to denote the image of $q$ (resp.\ $g$) in $A_j$.

In addition, we define actions of $Q$ and $G$ via \emph{directed} automorphisms on $\T_j$.
In each $X_i$ we fix a designated point $o \in X_i$, for simplicity we will not distinguish them in our notation. Define $\mathcal{S} = \prod_{i=1}^\infty X_i\setminus\{o\}$. 
For every $\alpha \in \mathcal{S}$ and every $q \in Q$ we recursively define a family of automorphisms $\tilde{q}_{[j]}^\alpha \in \Aut(\T_j)$ as follows.
Each $\tilde{q}_{[j]}^\alpha$  acts trivially on the first level of $\T_j$ and its section at $x \in X_j$ is given by
\[
	\tilde{q}_{[j]}^\alpha|_x = \begin{cases}
	 \tilde{q}_{[j+1]}^{\alpha} & \text{ if } x = o\\
	 q_{j+1} & \text{ if } x = \alpha_j\\
	 \id_{\T_{j+1}} & \text{ if } x \not\in \{o,\alpha_j\}.
	\end{cases}\]
In the same way every element $\beta \in \mathcal{S}$ and every $g \in G$ gives rise to an automorphism $\tilde{g}_{[j]}^\beta \in \Aut(\T_j)$.
We observe that $g \mapsto \tilde{g}^\beta_{[j]}$ defines a homomorphism from $G$ to $\Aut(\T_j)$ whose kernel is given by $\bigcap_{k \geq j} \ker(G \to G_k)$. The same applies to the homomorphisms $q \mapsto q^\alpha_{[j]}$.
Here we will study the groups
 \[ 
 	\Gamma^{\alpha,\beta}_{j} \defeq \langle A_j, \tilde{Q}^\alpha_{[j]}, \tilde{G}_{[j]}^\beta \rangle.
\]
Recall that the \emph{permutational wreath product} $K \wr_{Y} H$ of a group $K$ and a group $H$ acting on a set $Y$ is defined as the semidirect product $K^Y \rtimes H$, where $H$ acts on $K^{Y}$ by permuting the coordinates.

\begin{lemma}\label{lem:branch-group-arg}
Let $\alpha, \beta \in \mathcal{S}$. In addition to our standing assumptions we assume that for all $j \in \N$ the points $o$ and $\alpha_j$ have distinct stabilizers in $A_j$.
Then $\Gamma_j^{\alpha,\beta}$ is a finitely generated, perfect branch subgroup of $\Aut(\T_j)$ and $\Gamma_j^{\alpha,\beta} \cong \Gamma_{j+1}^{\alpha,\beta} \wr_{X_j} A_j$ for all $j \in \N$.
\end{lemma}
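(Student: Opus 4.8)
The plan is to prove the two assertions --- the branch property (together with perfectness and finite generation) and the wreath decomposition --- in an order that lets the wreath decomposition do most of the work. First I would establish the self-similar structure: show that under the rooted automorphisms $A_j$ and the directed automorphisms $\tilde q^\alpha_{[j]}, \tilde g^\beta_{[j]}$, conjugating a generator of $\Gamma^{\alpha,\beta}_j$ that lies in the first-level stabilizer and reading off its sections at the vertices $x \in X_j$ produces exactly the generators of $\Gamma^{\alpha,\beta}_{j+1}$ in one coordinate (and identity, or elements of $A_{j+1}$-conjugates of $Q_{j+1}$, in the others). Concretely, $\tilde q^\alpha_{[j]}$ has section $\tilde q^\alpha_{[j+1]}$ at $o$, section $q_{j+1}$ at $\alpha_j$, and identity elsewhere; together with the rooted action of $A_j$ permuting the $X_j$-subtrees transitively, this shows $\St_{\Gamma^{\alpha,\beta}_j}(1)$ surjects onto $\Gamma^{\alpha,\beta}_{j+1}$ in each coordinate. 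The hypothesis that $o$ and $\alpha_j$ have \emph{distinct} stabilizers in $A_j$ is what I would use to separate the two ``active'' coordinates of a directed generator: picking $a \in A_j$ fixing one of $o,\alpha_j$ but not the other, the commutator $[\tilde q^\alpha_{[j]}, a]$ (or a suitable product) has a section equal to $q_{j+1}$ at a single vertex and trivial sections elsewhere, so $Q_{j+1}$ embeds into $\RiSt_{\Gamma^{\alpha,\beta}_j}(v)$ for the relevant $v \in X_j$. Spherical transitivity of $A_j$ on $X_j$ then spreads this across all vertices of the first level.

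From this I would extract the isomorphism $\Gamma^{\alpha,\beta}_j \cong \Gamma^{\alpha,\beta}_{j+1} \wr_{X_j} A_j$ directly. The map sending $a \in A_j$ to its rooted action and sending each directed/rooted generator of $\Gamma^{\alpha,\beta}_j$ to the tuple of its sections over $X_j$ together with its image in $A_j$ gives a homomorphism $\Gamma^{\alpha,\beta}_j \to \Gamma^{\alpha,\beta}_{j+1}^{X_j} \rtimes A_j$; injectivity is immediate since an automorphism of $\T_j$ is determined by its sections at the first level and its action on $X_j$, by~\eqref{eq:decomposition-rooted-and-section}. Surjectivity is the content of the previous paragraph: the top group $A_j$ is hit by the rooted copy, and each coordinate copy of $\Gamma^{\alpha,\beta}_{j+1}$ is hit because (i) the $A_{j+1}$-conjugates of $Q_{j+1}$ generate $A_{j+1}$ by assumption~(A\ref{it:generation}) and sit inside $\RiSt_{\Gamma^{\alpha,\beta}_j}(x)$ for each $x \in X_j$ by the separation argument, and (ii) the directed generators $\tilde q^\alpha_{[j+1]}, \tilde g^\beta_{[j+1]}$ appear as sections at $o$ of $\tilde q^\alpha_{[j]}, \tilde g^\beta_{[j]}$, so after projecting to the $o$-coordinate the whole of $\Gamma^{\alpha,\beta}_{j+1}$ is reached and then transported to every other coordinate via conjugation by rooted elements of $A_j$ (using spherical transitivity).

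With the isomorphism in hand, the remaining claims follow by bootstrapping. Finite generation is clear: $\Gamma^{\alpha,\beta}_j$ is generated by the finite generating sets of $A_j$ (itself finitely generated, being generated by $G_n$-conjugates of $Q_n$, and finite --- wait, $A_j$ is a subgroup of a finite symmetric group, hence finite, so certainly finitely generated) together with finitely many directed generators coming from generators of $Q$ and $G$, which are finitely generated by~(A1). Perfectness: $A_j$ is perfect as noted in the text, and from $\Gamma^{\alpha,\beta}_j \cong \Gamma^{\alpha,\beta}_{j+1} \wr_{X_j} A_j$ one sees that $\Gamma^{\alpha,\beta}_j$ is perfect provided $\Gamma^{\alpha,\beta}_{j+1}$ is perfect and $A_j$ is perfect and acts transitively on $X_j$ --- a standard lemma on wreath products --- so a descent/limit argument gives perfectness at every level. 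For the branch property I must check spherical transitivity and that $\RiSt_{\Gamma^{\alpha,\beta}_j}(\ell)$ has finite index for all $\ell$. Spherical transitivity follows by induction from transitivity of $A_j$ on $X_j$ and of $\Gamma^{\alpha,\beta}_{j+1}$ on the lower levels, using the wreath decomposition. For the rigid stabilizers, iterating the decomposition $\ell$ times gives $\Gamma^{\alpha,\beta}_j \cong \Gamma^{\alpha,\beta}_{j+\ell} \wr_{X^{\ell}} (\text{something finite})$, and inside this the product over $v \in X^{\ell}$ of copies of $[\Gamma^{\alpha,\beta}_{j+\ell}, \Gamma^{\alpha,\beta}_{j+\ell}] = \Gamma^{\alpha,\beta}_{j+\ell}$ (perfect!) sits inside $\RiSt_{\Gamma^{\alpha,\beta}_j}(\ell)$; since this product has finite index in the full iterated wreath product (the quotient being the finite top group times the abelianization, which is trivial), $\RiSt_{\Gamma^{\alpha,\beta}_j}(\ell)$ has finite index. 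I expect the main obstacle to be the bookkeeping in the separation argument of the first paragraph: one must produce, purely from ``$o$ and $\alpha_j$ have distinct $A_j$-stabilizers,'' an explicit element of $\RiSt_{\Gamma^{\alpha,\beta}_j}(x)$ realizing $q_{j+1}$ for each $x$, and then verify that these, together with the $o$-sections, really generate the full coordinate copy --- this is where the hypotheses (A1)--(A\ref{it:generation}) all get used and where a sloppy argument would miss a subgroup.
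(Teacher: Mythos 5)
Your overall strategy is the same as the paper's: use the distinct-stabilizer hypothesis to separate the two active first-level coordinates of a directed generator, deduce that the rigid stabilizer of each first-level vertex contains a full copy of $\Gamma^{\alpha,\beta}_{j+1}$, and then read off the wreath decomposition, finite generation and perfectness. However, the step you yourself single out as the crux is wrong as stated. With $a\in\St_{A_j}(o)$ and $a\alpha_j\neq\alpha_j$, the commutator $[\tilde q^\alpha_{[j]},a]$ has non-trivial sections at the \emph{two} vertices $\alpha_j$ and $a^{-1}\alpha_j$ (the sections at $o$ cancel), so it does not lie in the rigid stabilizer of any single first-level vertex; no single commutator of a directed generator with a rooted element does. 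The paper instead takes the commutator of two conjugated \emph{directed} elements, $[\,{}^{a}(\tilde p^\alpha_{[j]}),\tilde q^\alpha_{[j]}]$: the supports $\{o,a\alpha_j\}$ and $\{o,\alpha_j\}$ meet only in $o$, the sections at $a\alpha_j$ and at $\alpha_j$ are commutators with the identity and hence trivial, and the section at $o$ is $[\tilde p^\alpha_{[j+1]},\tilde q^\alpha_{[j+1]}]$; perfectness of $Q$ (assumption (A2)) is then exactly what upgrades the set of such commutators to all of $\tilde Q^\alpha_{[j+1]}$ inside $\RiSt(o)$. Your parenthetical ``or a suitable product'' could be made to work (the elements $[\tilde q^\alpha_{[j]},a]$ do generate the full product of copies of $Q_{j+1}$ over the relevant orbit, but again only because $Q$ is perfect), yet in either version (A2) is the essential ingredient and your write-up never identifies it, attributing the step instead to the stabilizer hypothesis alone.

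A second gap: to get $\tilde G^\beta_{[j+1]}$ into $\RiSt(o)$ you appeal to ``projecting to the $o$-coordinate,'' but surjectivity of the section map $\St_{\Gamma^{\alpha,\beta}_j}(1)\to\Gamma^{\alpha,\beta}_{j+1}$ at $o$ is strictly weaker than containment of $\Gamma^{\alpha,\beta}_{j+1}$ in the direct factor $\RiSt(o)$, and the latter is what the wreath decomposition requires. The missing step, which the paper supplies, is a correction: once one knows $\RiSt(\beta_j)\supseteq A_{j+1}\supseteq G_{j+1}$ (via (A3) and conjugation by $\tilde G^\beta_{[j]}$), one multiplies $\tilde g^\beta_{[j]}$ by the inverse of the element of $\RiSt(\beta_j)$ whose section at $\beta_j$ is $g_{j+1}$, leaving an element supported only at $o$ with section $\tilde g^\beta_{[j+1]}$. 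You correctly flag this as the place where a sloppy argument would miss a subgroup, but the proposal does not actually close it.
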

\begin{proof}
We note that $\Gamma_j^{\alpha,\beta}$ is finitely generated, since it is generated by three finitely generated subgroups.

We consider the action of $\Gamma_j^{\alpha,\beta}$ on $X_j$ via $A_j$.
To prove that $\Gamma_j^{\alpha,\beta}$ is a branch subgroup of $\Aut(\T_j)$, we show that $\RiSt_{\Gamma_j^{\alpha,\beta}}(\ell)$ coincides with $\St_{\Gamma_j^{\alpha,\beta}}(\ell)$ for every $\ell \in \N$.
By induction, it suffices to verify that $\RiSt_{\Gamma_j^{\alpha,\beta}}(1) = \St_{\Gamma_j^{\alpha,\beta}}(1)$ and $\Gamma_j^\beta \cong \Gamma_{j+1}^\beta \wr_{X_j} A_j$.
Since all first level sections of $\Gamma_{j}^{\alpha,\beta}$ lie in $\Gamma_{j+1}^{\alpha,\beta}$, it is enough to show that $\prod_{x \in X_j}\Gamma_{j+1}^{\alpha,\beta}$ is contained in $\RiSt_{\Gamma_j^{\alpha,\beta}}(1)$.
To see this, we use a commutator trick that was extracted by Segal
\cite[Lemma 4]{Segal01} from a proof of Grigorchuk~\cite[Theorem 4]{Grigorchuk00}.
By assumption $o$ and $\alpha_j$ have distinct stabilizers. Since $A_j$ is finite and acts transitively on $X_j$, none of the stabilizers contains the other.  This means, there is an element $a \in \St_{A_j}(o)$ such that $a.\alpha_j \neq \alpha_j$. Then for all $p,q \in Q$ we have
\[
[\ ^{a}(\tilde{p}^\alpha_{[j]}),\tilde{q}^\alpha_{[j]}]  = [\tilde{p}^\alpha_{[j+1]},\tilde{q}^\alpha_{[j+1]}].
\]
Since $Q$ is perfect, we deduce that $\tilde{Q}_{[j+1]}^\alpha$ lies in $\RiSt(o)$.
By taking products $\tilde{q}^\alpha_{[j]}(\tilde{q}^\alpha_{[j+1]})^{-1}$, we see that $\RiSt(\alpha_j)$ contains $Q_{j+1}$. Since $A_j$ acts transitively on the first level of $\T_j$, we conclude that $\RiSt(\beta_j)$ contains $Q_{j+1}$ and conjugation with $\tilde{G}_{[j]}^\beta$ shows that it contains all $G_{j+1}$-conjugates of $Q_{j+1}$.
It therefore follows from assumption (A\ref{it:generation}) that $\RiSt(\beta_j)$ contains $A_{j+1}$ and the transitivity of $A_j$ shows that the rigid stabilizers of first level vertices contain $A_{j+1}$ and in particular $G_{j+1}$.
Let $g \in G$ be arbitrary and
let $k \in \RiSt(\beta_j)$ be such that $k|_{\beta_j} = g_j$. Then $k^{-1}\tilde{g}^\beta_{[j]} = \tilde{g}^\beta_{[j+1]}$ and we conclude that that $\RiSt(o)$ contains $\tilde{G}^\beta_{[j+1]}$.
Using again the transitivity of $A_j$ on the first level of $\T_j$, we conclude that $\Gamma_{j+1}^{\alpha,\beta}$ is contained in, and hence coincides with, the rigid stabilizer of every vertex in the first level of $\T_j$.
This gives us
\[
\Gamma_j^{\alpha,\beta}
= \St_{\Gamma_j^{\alpha,\beta}}(1) \rtimes A_j
\cong (\Gamma_{j+1}^{\alpha,\beta})^{X_j} \rtimes A_j
= \Gamma_{j+1}^{\alpha,\beta} \wr_{X_j} A_j.
\]
Since $A_j$ is perfect, this equation implies that $\Gamma^{\alpha,\beta}_j$ is perfect, which proves the lemma.
\end{proof}

We want to prove that if the sets $X_i$ grow suitably in size and $\alpha$, $\beta$ are generic, then $\Gamma_j^{\alpha,\beta}$ will resemble $Q \times G$ up to powers.
To this end, we use the notion of \emph{stabilized sections} introduced by Petschick in \cite{PetschickPeriodicity}.
Given $g \in \Aut(\T_j)$ and a vertex $u$ of $\T_j$, let $\ell_u(g)$ denote the length of the orbit of $u$ under $g$.
The stabilized section $g\Vert_u$ of $g$ at $u$ is $g\Vert_u \defeq g^{\ell_u(g)}|_u$.
Note that we always have $g\Vert_u \Vert_v = g\Vert_{uv}$.

We assume from now on that $\alpha_j \neq \beta_j$ for all $j \in \N$.
In this situation $\tilde{Q}^\alpha_{[j]}$ and $\tilde{G}^\beta_{[j]}$ commute and the group $\Gamma_j^{\alpha,\beta}$ is a quotient of the free product $F_j = A_j * (Q\times G)$ via a homomorphism $f^{\alpha,\beta}_j \colon F_j \to \Gamma_j^{\alpha,\beta}$ by mapping $a \in A_j$  to itself, $q \in Q$ to $\tilde{q}^\alpha_{[j]}$ and $g \in G$ to $\tilde{g}_{[j]}^\beta$. Every element $w \in F_j$
can be written uniquely in normal form with letters alternating between non-trivial elements in $A_j$ and non-trivial elements in $B = Q \times G$; see \cite[6.2.4]{Robinson}.
We write $\len_B(w)$ for the number of letters from $B$ and we write $\len_A(w)$ for the number of letters in $A_j$.
Note that $|\len_B(w)-\len_A(w)| \leq 1$.
The pair $\len(w) \defeq (\len_B(w), \len_A(w))$ is called the \emph{length} of $w$.
We endow $\Z \times \Z$ with the lexicographical ordering, i.e.\ we have $(b,a) > (b',a')$ if either $b > b'$ or $b=b'$ and $a > a'$.
For $\gamma \in \Gamma_j^{\alpha,\beta}$ we write $\len(\gamma) = (\len_G(\gamma),\len_A(\gamma))$ to denote the minimal length of an element $w \in F_j$ with $f_j^{\alpha,\beta}(w) = \gamma$.
In this case we say that $w$ is a minimal representative of $\gamma$.
In what follows it will be useful to observe that for all $\gamma_1,\gamma_2 \in \Gamma^{\alpha,\beta}_j$ we have
\begin{equation}\label{eq:gamma1-gamma2}
\max(\len(\gamma_1),\len(\gamma_2)) - \min(\len(\gamma_1),\len(\gamma_2)) \leq \len(\gamma_1 \gamma_2) \leq \len(\gamma_1) + \len(\gamma_2),
\end{equation}
which is a direct consequence of the definitions involved.
The next result is a reformulation of~\cite[Lemma 2.4]{PetschickPeriodicity}.

\begin{lemma}\label{lem:dec-len}
For every $\gamma \in \Gamma_j^{\alpha,\beta}$ we have
\begin{enumerate}
\item\label{it:sum-bound} $\sum \limits_{x \in X_j} \len_B(\gamma|_x) \leq \len_B(\gamma)$, and
\item every vertex $x$ of $\T_j$ satisfies $\len_B(\gamma\Vert_x) \leq \len_B(\gamma)$.
\end{enumerate}
\end{lemma}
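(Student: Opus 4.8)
The plan is to prove part~(1) for every index $j$ by a direct computation with the generators, then derive the first-level case of part~(2) from it, and finally bootstrap part~(2) to arbitrary vertices via the identity $\gamma\Vert_u\Vert_v = \gamma\Vert_{uv}$. The whole argument is uniform in $j$, so it suffices to treat a fixed $j$.

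For part~(1), fix $\gamma \in \Gamma_j^{\alpha,\beta}$ and pick a minimal representative $w = s_1\cdots s_n \in F_j$ in normal form; among $s_1,\dots,s_n$ there are exactly $\len_B(\gamma)$ letters from $B = Q\times G$, and $\len_B(s_i) \le 1$ for those while $\len_B(s_i) = 0$ for the letters from $A_j$, so $\sum_i \len_B(s_i) \le \len_B(\gamma)$. Identifying each $s_i$ with its image $f_j^{\alpha,\beta}(s_i)$ and iterating $(gh)|_u = g|_{h(u)}h|_u$, one gets $\gamma|_x = \prod_{i=1}^n s_i|_{y_i(x)}$ with $y_i(x) = (s_{i+1}\cdots s_n)(x)$. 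Each factor lies in $\Gamma_{j+1}^{\alpha,\beta}$ (first level sections of elements of $\Gamma_j^{\alpha,\beta}$ do, as observed in the proof of Lemma~\ref{lem:branch-group-arg}), so subadditivity of $\len_B$ — the right inequality of~\eqref{eq:gamma1-gamma2}, applied in $\Gamma_{j+1}^{\alpha,\beta}$ — gives $\len_B(\gamma|_x) \le \sum_i \len_B(s_i|_{y_i(x)})$. Summing over $x \in X_j$ and noting that for each fixed $i$ the map $x \mapsto y_i(x)$ is a bijection of $X_j$, claim~(1) reduces to the estimate $\sum_{y\in X_j}\len_B(s|_y) \le \len_B(s)$ for every generator $s$ of $\Gamma_j^{\alpha,\beta}$.

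This per-generator estimate is the only point requiring care, and it is where the hypotheses $\alpha,\beta\in\mathcal S$ and $\alpha_j\neq\beta_j$ enter. It is trivial when $s\in A_j$, since then every $s|_y$ is trivial. When $s$ is the image of $(q,g)\in B$, the fact that $\tilde g^\beta_{[j]}$ acts trivially on the first level lets us write $s|_y = \tilde q^\alpha_{[j]}|_y\cdot\tilde g^\beta_{[j]}|_y$, and by the defining recursion this is trivial unless $y$ lies in the three-element set $\{o,\alpha_j,\beta_j\}$ (pairwise distinct because $\alpha_j,\beta_j\neq o$ and $\alpha_j\neq\beta_j$). The sections at $\alpha_j$ and $\beta_j$ are $q_{j+1}$ and $g_{j+1}$, which lie in $A_{j+1}$ and hence contribute nothing to $\len_B$, while the section at $o$ equals $f_{j+1}^{\alpha,\beta}((q,g))$, of $\len_B$ at most $1 = \len_B(s)$. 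This yields the estimate and completes part~(1).

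For part~(2), I would first take $x\in X_j$ with $\langle\gamma\rangle$-orbit of length $\ell = \ell_x(\gamma)$: expanding $\gamma\Vert_x = \gamma^\ell|_x$ by the cocycle identity writes it as a product of the $\ell$ sections $\gamma|_{\gamma^k(x)}$, $0\le k<\ell$, taken at the $\ell$ distinct vertices of the orbit, so $\len_B(\gamma\Vert_x)\le\sum_{k}\len_B(\gamma|_{\gamma^k(x)})\le\sum_{y\in X_j}\len_B(\gamma|_y)\le\len_B(\gamma)$ by~(1). For a general vertex $x = x_1x_2\cdots x_k$ one applies this to $\gamma\Vert_{x_1}\in\Gamma_{j+1}^{\alpha,\beta}$ and iterates, using $\gamma\Vert_x = (\gamma\Vert_{x_1})\Vert_{x_2\cdots x_k}$ and induction on the level. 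The main obstacle is thus confined to the bookkeeping of the per-generator estimate in part~(1); everything else is a formal manipulation of sections.
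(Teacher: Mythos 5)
Your proof is correct and follows essentially the same route as the paper: part~(1) rests on the observation that each $B$-letter contributes exactly one $B$-letter (its section at $o$) and otherwise only rooted elements to the first-level sections, and part~(2) is the orbit expansion $\gamma\Vert_x=\gamma|_{\gamma^{\ell-1}x}\cdots\gamma|_x$ combined with~(1) and the identity $\gamma\Vert_u\Vert_v=\gamma\Vert_{uv}$. You merely spell out the bookkeeping (the bijections $x\mapsto y_i(x)$ and the per-generator estimate) that the paper compresses into one sentence.
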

\begin{proof}
Every $B$-letter in $w$ evaluates to an element which contributes exactly one element in $\tilde{B}_{j+1} \defeq \tilde{Q}^\alpha_{[j+1]}\tilde{G}_{[j+1]}^\beta$ (i.e., one $B$-letter) to the sections on the first level of $\T_j$, hence \eqref{it:sum-bound}.
For the second assertion, the equation $\gamma\Vert_u \Vert_v = \gamma\Vert_{uv}$ allows us to assume that $x$ is a vertex in the first level of $\T_j$.
Consider the orbit length $\ell = \ell_x(\gamma)$ of $x$ under $\gamma$.
Then we have
\begin{equation}\label{eq:gamma-ell-x}
\gamma^\ell |_x = \gamma|_{\gamma^{\ell-1}x} \cdot \gamma|_{\gamma^{\ell-2}x} \cdots \gamma|_x,
\end{equation}
where the vertices $x, \gamma x, \dots, \gamma^{\ell-1}x$ are pairwise distinct.
Then the assertion follows from~\eqref{it:sum-bound}.
\end{proof}

\begin{definition}\label{def:shrinking}
We say that $(\alpha,\beta) \in \mathcal{S}^2$ is \emph{shrinking} if $\alpha_j \neq \beta_j$ for all $j \in \N$ and for each $\gamma \in \Gamma_1^{\alpha,\beta}$ there is a $k \in \N$ such that for all vertices $x$ in $\T_1$ of level $k$ we have $\len(\gamma\Vert_x) \leq (1,0)$; in other words, $\gamma\Vert_x$ lies in $\tilde{B}^{\alpha,\beta}_{k+1}\defeq \tilde{Q}_{[k+1]}^\alpha\tilde{G}_{[k+1]}^\beta$ or in $A_{k+1}$.
\end{definition}

This property will be important for us due to the following simple observation.

\begin{lemma}\label{lem:use-shrink}
Assume that $Q$ is finite.
Let $(\alpha,\beta) \in \mathcal{S}^2$ be shrinking and let $\gamma \in \Gamma_1^{\alpha,\beta}$.
Then for all sufficiently large $k \in \N$, there is some $m \in \N$ such that $\gamma^m \in \St_{\Gamma_1^{\alpha,\beta}}(k)$ and $\gamma^m|_v \in \tilde{G}^{\beta}_{[k+1]}$ for all vertices $v$ of level $k$ in $\T_1$.
\end{lemma}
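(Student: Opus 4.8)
The plan is twofold: first to upgrade the shrinking hypothesis so that it applies at every level past a threshold, and then to choose the exponent $m$ by a short bookkeeping argument exploiting that $Q$ and each $A_{k+1}$ are finite. Concretely, using that $(\alpha,\beta)$ is shrinking, I would fix $k_0 \in \N$ with $\gamma\Vert_x \in \tilde B^{\alpha,\beta}_{k_0+1} \cup A_{k_0+1}$ for every vertex $x$ of $\T_1$ of level $k_0$, and then prove the lemma for every $k \ge k_0$. The key point is that this dichotomy propagates downward: for a vertex $v$ of level $k$ with length-$k_0$ prefix $x$, write $v = xu$, so that $\gamma\Vert_v = (\gamma\Vert_x)\Vert_u$ by the identity $g\Vert_u\Vert_w = g\Vert_{uw}$. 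An element of $A_{k_0+1}$ is a rooted automorphism, hence all of its sections, and therefore all of its stabilized sections at vertices of positive level, are trivial. Unwinding the recursive definition of the directed automorphisms, one checks that every stabilized section of $\tilde q^\alpha_{[k_0+1]}\tilde g^\beta_{[k_0+1]}$ (with $q \in Q$, $g \in G$) at a vertex of level $\ell' \ge 1$ is either trivial, or equals $\tilde q^\alpha_{[k_0+1+\ell']}\tilde g^\beta_{[k_0+1+\ell']}$ at the ray vertex $o^{\ell'}$, or equals one of $q_{k_0+1+\ell'}$, $g_{k_0+1+\ell'}$ in $A_{k_0+1+\ell'}$ at $o^{\ell'-1}\alpha_{k_0+\ell'}$, $o^{\ell'-1}\beta_{k_0+\ell'}$. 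Taking $\ell' = k - k_0$, in every case $\gamma\Vert_v \in \tilde B^{\alpha,\beta}_{k+1} \cup A_{k+1}$.

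Now fix $k \ge k_0$. I would put $M := \ord(\pi_k(\gamma))$, which is finite since the level-$k$ vertex set $X_1 \times \ldots \times X_k$ is finite, and $L := \operatorname{lcm}(\exp(A_{k+1}), \exp(Q))$, which is finite because $A_{k+1}$ and $Q$ are finite, and claim that $m := ML$ works. Since $\pi_k(\gamma^m) = \id$ we have $\gamma^m \in \St_{\Gamma_1^{\alpha,\beta}}(k)$. For a vertex $v$ of level $k$ set $\ell := \ell_v(\gamma)$; as $\gamma^\ell$ fixes $v$ and $\ell \mid M \mid m$, we get $\gamma^m|_v = (\gamma^\ell|_v)^{m/\ell} = (\gamma\Vert_v)^{m/\ell}$, where $m/\ell$ is a positive multiple of $L$. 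By the first paragraph $\gamma\Vert_v \in \tilde B^{\alpha,\beta}_{k+1} \cup A_{k+1}$. If $\gamma\Vert_v \in A_{k+1}$, then $\exp(A_{k+1}) \mid m/\ell$ gives $\gamma^m|_v = \id \in \tilde G^\beta_{[k+1]}$. If $\gamma\Vert_v = \tilde q^\alpha_{[k+1]}\tilde g^\beta_{[k+1]}$ with $q \in Q$, $g \in G$, then the two factors commute (because $\alpha_j \ne \beta_j$ for all $j$), and since $q \mapsto \tilde q^\alpha_{[k+1]}$ is a homomorphism from $Q$ with $\exp(Q) \mid m/\ell$ the first factor contributes $\id$; hence $\gamma^m|_v = (\tilde g^\beta_{[k+1]})^{m/\ell} = \widetilde{(g^{m/\ell})}^\beta_{[k+1]} \in \tilde G^\beta_{[k+1]}$. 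As $v$ was arbitrary, the lemma follows.

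I expect the main obstacle to be the propagation statement in the first paragraph: the shrinking hypothesis only guarantees the length bound at a single level, so one has to expand the recursive definitions of the directed and rooted automorphisms and track their stabilized sections level by level. Granting that, the choice of $m$ and the verification in the second paragraph are routine manipulations with orders of elements in the finite groups $Q$ and $A_{k+1}$.
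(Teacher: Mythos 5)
Your proof is correct and follows essentially the same route as the paper's: fix $k_0$ from the shrinking hypothesis, propagate the dichotomy $\gamma\Vert_v \in \tilde B^{\alpha,\beta}_{k+1}\cup A_{k+1}$ down to level $k\ge k_0$ using that sections of rooted elements are trivial and that directed elements have directed or rooted sections, then kill the unwanted factors by raising to a common multiple of the exponents of $Q$ and $A_{k+1}$. The only difference is organizational (you analyze stabilized sections of $\gamma$ directly and take one power $m=ML$, whereas the paper first passes to $\gamma^m$ and then multiplies by $e$), and your spelled-out propagation step is the same observation the paper compresses into one sentence.
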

\begin{proof}
Since $(\alpha,\beta)$ is shrinking, there is a $k_0 \in \N$ such that $\gamma\Vert_v$ lies in $\tilde{B}_{k_0+1}^{\alpha,\beta}$ or $A_{k_0+1}$ for all vertices $v$ of level $k_0$.
Let $k \geq k_0$ be a natural number and let $m$ be a multiple of the orbit sizes of $\gamma$ on level $k$.
Then $\gamma^m$ stabilizes every vertex of level at most $k$.
In particular, $\gamma^m$ stabilizes the vertices on the $k_0$-th level and $\gamma^m|_v = (\gamma\Vert_v)^{m/\ell_v(\gamma)}$ lies in $\tilde{B}_{k_0+1}^{\alpha,\beta}$ or $A_{k_0+1}$ for all $v$ of level $k_0$.
Since sections of rooted elements are trivial, it follows from the definition of $\tilde{B}^{\alpha,\beta}_{k_0+1}$ that the sections $\gamma^m|_v$ are in $A_{k+1}$ or $\tilde{B}^{\alpha,\beta}_{k+1}$ for every vertex $v$ of level $k$.
Let $e$ be a common multiple of the exponent of $Q$ and of $A_{k+1}$.
Then $\gamma^{me} \in \St_{\Gamma_1^{\alpha,\beta}}(k)$ and all sections of $\gamma^{me}$ on level $k$ are contained in $\tilde{G}^\beta_{k+1}$, which proves the lemma.
\end{proof}

We will show that under mild hypothesis most pairs $(\alpha,\beta)$ are shrinking.
To this end we consider non-empty disjoint subsets $Y_i,Y'_i \subseteq X_i \setminus\{o\}$ for every $i \in \N$. these sets will later become useful in ensuring the stabilizer condition in Lemma~\ref{lem:branch-group-arg} and to make sure that $\alpha_j \neq \beta_j$ holds for all $j$. Let $\mathcal{Y}= \prod_{i=1}^\infty Y_i$ and let $\mathcal{Y}'= \prod_{i=1}^\infty Y'_i$.
Let $m_j$ denote the maximal order of the elements in $A_j$.

\begin{proposition}\label{prop:shrinking}
Suppose that for every $n \in \N$ there is some $j$ satisfying 
\[ \abs{Y_j}\abs{Y_j'} > nm_j(\abs{Y_j}+\abs{Y'_j}).\]
Then the set of shrinking pairs is dense in $\mathcal{Y} \times \mathcal{Y}'$.
\end{proposition}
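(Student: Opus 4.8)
The idea is to produce a dense $G_\delta$ set of shrinking pairs inside the compact metrizable space $\mathcal Y\times\mathcal Y'$ and conclude by Baire's theorem. First note that ``$\alpha_j\neq\beta_j$ for all $j$'' holds automatically on $\mathcal Y\times\mathcal Y'$ since $Y_j$ and $Y_j'$ are disjoint, so this clause of the definition of shrinking may be ignored. Every element of $\Gamma_1^{\alpha,\beta}$ equals $f_1^{\alpha,\beta}(w)$ for some $w$ in the \emph{fixed} countable group $F_1=A_1*(Q\times G)$. Iterating \eqref{eq:decomposition-rooted-and-section}, together with the explicit sections of the directed automorphisms, one sees that for fixed $w$ and $k$ the stabilized section $f_1^{\alpha,\beta}(w)\Vert_x$ with $x$ of level $k$ in $\T_1$ is represented by a word $w^{\alpha,\beta}_x\in F_{k+1}$ depending only on $w$ and on the finitely many coordinates $\alpha_1,\dots,\alpha_k,\beta_1,\dots,\beta_k$, and $\len_B(w^{\alpha,\beta}_x)\le\len_B(w)$ since a $B$-letter descends to at most one $B$-letter on each level. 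Hence
\[
U_w\defeq\Set{(\alpha,\beta)}{\text{there is }k\text{ with }\len(w^{\alpha,\beta}_x)\le(1,0)\text{ for all }x\text{ of level }k}
\]
is open; and a word of length $\le(1,0)$ is a single letter (or empty), whose image under $f_{k+1}^{\alpha,\beta}$ lies in $\tilde B^{\alpha,\beta}_{k+1}$ or in $A_{k+1}$. So $\bigcap_{w\in F_1}U_w$ is a $G_\delta$ set of shrinking pairs, and it suffices to prove that each $U_w$ is dense.

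Fix $w\in F_1$, put $b=\len_B(w)$ and $\gamma=f_1^{\alpha,\beta}(w)$, and fix a non-empty basic open set, i.e.\ prescribed values $\alpha_1,\dots,\alpha_N$ and $\beta_1,\dots,\beta_N$ in the respective $Y_i,Y_i'$; we must extend them to a point of $U_w$. Each $w^{\alpha,\beta}_x$ alternates between rooted letters from the relevant $A_i$ and directed letters from $\tilde B^{\alpha,\beta}_i=\tilde Q^\alpha_{[i]}\tilde G^\beta_{[i]}$, with at most $b$ directed letters (Lemma~\ref{lem:dec-len}). Passing one level further down annihilates every rooted letter, while a directed letter has a directed section only over the vertex $o$, a rooted section only over the two vertices $\alpha_i$ and $\beta_i$, and a trivial section elsewhere. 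Consequently, if at a single level $j>N$ we choose $\alpha_j\in Y_j$ and $\beta_j\in Y_j'$ \emph{outside} the finite set $W_j\subseteq X_j$ of all vertices over which some directed letter of one of the words $w^{\alpha,\beta}_x$, $x$ of level $j-1$, that is not already rooted or trivial would acquire a rooted section, then at level $j$ each such word becomes a product of directed letters over $o$, and this product collapses to a \emph{single} directed letter because $q\mapsto\tilde q^\alpha_{[j]}$ is a homomorphism and $\tilde Q^\alpha_{[j]}$ commutes with $\tilde G^\beta_{[j]}$; the remaining level-$j$ words are rooted or trivial. All of them then have length $\le(1,0)$, so $(\alpha,\beta)\in U_w$ once the coordinates between $N$ and $j$ are filled in arbitrarily inside the $Y_i,Y_i'$.

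The only quantitative input is a bound $\abs{W_j}\le b^2 m_j$. Granting it, the hypothesis is equivalent to the statement that for every $n$ there are infinitely many $j$ with $\frac{m_j}{\abs{Y_j}}+\frac{m_j}{\abs{Y_j'}}<\frac1n$, hence with $\min(\abs{Y_j},\abs{Y_j'})>n\,m_j$; choosing such a $j>N$ with $n=b^2$ gives $\min(\abs{Y_j},\abs{Y_j'})>b^2 m_j\ge\abs{W_j}$, so neither $Y_j$ nor $Y_j'$ is contained in $W_j$ and the required $\alpha_j\in Y_j\setminus W_j$, $\beta_j\in Y_j'\setminus W_j$ exist.

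Establishing $\abs{W_j}\le b^2 m_j$ is the technical heart, and the step I expect to be the main obstacle. A naive count over all $\prod_{i<j}\abs{X_i}$ vertices of level $j-1$ is useless, since then the needed $n$ would depend on $j$. One must instead organize the level-$(j-1)$ vertices into the orbits of $\gamma$: the words attached to the vertices of one orbit are cyclic permutations of one another, at most $b$ orbits carry a word that is not rooted, and the numbers of directed letters of these orbits sum to at most $b$ (a consequence of Lemma~\ref{lem:dec-len}); moreover each such orbit with $r$ directed letters contributes at most $r^2 m_j$ vertices to $W_j$, the child-orbits involved having size at most $m_j$ because $m_j$ is the largest order of an element of $A_j$. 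Summing, $\abs{W_j}\le\sum r^2 m_j\le b^2 m_j$; the point is that ``$\le b$ directed letters'' is a \emph{global} budget for $\gamma$, which prevents the orbit-size factors from compounding over the levels into a useless $\prod_{i<j}m_i$. Making the orbit reduction rigorous — above all, checking that forcing the collapse for one representative of each orbit forces it for every vertex of that orbit — is the delicate part; the presence of $m_j\bigl(\tfrac1{\abs{Y_j}}+\tfrac1{\abs{Y_j'}}\bigr)$ in the hypothesis suggests that one may want to carry out this step probabilistically, choosing $\alpha_j,\beta_j$ uniformly at random and bounding the probability of hitting $W_j$.
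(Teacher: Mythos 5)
Your overall architecture---reduce the shrinking condition to countably many conditions indexed by words $w$, observe that each is open because it depends on finitely many coordinates, and invoke Baire---is the same as the paper's, which works with the complementary closed sets $M(w)$. The divergence is in how density of each $U_w$ is established, and that is exactly where the gap sits. The paper never attempts your one-step total collapse to length $\leq(1,0)$: for a single word $w$ it only shows that at one well-chosen level the $B$-length of \emph{every} stabilized section drops by at least one, and the descent from $\len_B(w)$ all the way down to $(1,0)$ is delegated to Baire's theorem applied to the family of all $M(w')$ with $w'$ ranging over all $F_{j'}$ and all levels $j'$, using the monotonicity of $\len_B$ under stabilized sections from Lemma~\ref{lem:dec-len}. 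The corresponding counting statement (Lemma~\ref{lem:shrink-size}) is therefore much weaker than what your plan requires: it bounds the set of $(\alpha_j,\beta_j)$ for which the $B$-length is merely \emph{preserved} at some child, with a bound linear in $\len_B(w)$, and it is proved for one word at one level, with no need to control all level-$(j-1)$ vertices at once.

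The step you yourself flag as the main obstacle is genuinely broken as proposed, in two ways. First, $W_j$ is not well defined as stated: a directed letter ${}^{a_i}\tilde b_i$ acquires rooted sections at the two vertices $a_i\alpha_j$ and $a_i\beta_j$ \emph{whatever} $\alpha_j,\beta_j$ are, so ``vertices over which some directed letter would acquire a rooted section'' cannot be avoided; what must be excluded is that a rooted section and a directed section land in the same $\langle a\rangle$-orbit (equivalently, the same $\gamma$-orbit of level-$j$ vertices), since only then does some stabilized section mix letters of both kinds and fail to lie in $A_{j+1}\cup\tilde B^{\alpha,\beta}_{j+1}$. Second, the orbit reduction in the form ``check one representative per orbit'' fails: for $z$ of level $j$ one has $\gamma\Vert_{\gamma^t z}={}^{\gamma^t|_{z}}(\gamma\Vert_{z})$, and conjugation by the section $\gamma^t|_z\in\Gamma_{j+1}^{\alpha,\beta}$ preserves neither $\len$ nor membership in $A_{j+1}\cup\tilde B^{\alpha,\beta}_{j+1}$, so the collapse at the children of one orbit representative does not propagate to the children of the other vertices of that orbit. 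The statement you want is nonetheless true, but it must be proved globally per orbit rather than by conjugating a representative: each $B$-letter of $w$ carries exactly one level-$j$ vertex with a directed section and two with rooted sections, mixing forces $\alpha_j$ (or $\beta_j$) into a translate of the intersection of a single $\gamma$-orbit with one fibre of $X_j$, a set of at most $m_j$ elements, and the budget $\sum_O r_O\leq\len_B(w)$ from Lemma~\ref{lem:dec-len} then yields $|W_j|\leq\len_B(w)^2m_j$. Until that accounting is carried out correctly, the density of $U_w$---and hence the proposition---is not established.
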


Before we can prove the proposition, we need a technical lemma.
For each $w \in F_j$ we define a set $Z_j(w)$ of pairs $(s,t) \in Y_j\times Y_j'$. If $\len_B(w) > 1$, then $Z_j(w)$ contains those pairs that satisfy $\len_B(f_j^{\alpha,\beta}(w)\Vert_x) = \len_B(w)$ for some vertex $x \in X_j$ and some $(\alpha,\beta) \in \mathcal{Y} \times \mathcal{Y}'$ with $\alpha_j = s$ and $\beta_j = t$.
If $\len_B(w) = 1$, then $Z_j(w)$ consists of all pairs satisfying $\len(f_j^{\alpha,\beta}(w)\Vert_x) > (1,0)$.

\begin{lemma}\label{lem:shrink-size}
Let $w \in F_j$ with $\len(w) > (1,0)$. 
Then 
\[|Z_j(w)| \leq  \len_B(w)m_j(|Y_j|+|Y'_j|).\]
\end{lemma}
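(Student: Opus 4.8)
The plan is to track how a $B$-letter in the normal form of $w$ can survive, with its full $B$-length, into a stabilized section $f_j^{\alpha,\beta}(w)\Vert_x$ at a first-level vertex $x \in X_j$, and to count for how many pairs $(s,t) = (\alpha_j,\beta_j) \in Y_j \times Y_j'$ this can happen. The key observation, already implicit in Lemma~\ref{lem:dec-len}\eqref{it:sum-bound}, is that each $B$-letter of $w$ contributes exactly one $\tilde B_{j+1}$-letter distributed among the first-level sections of $f_j^{\alpha,\beta}(w)$; by~\eqref{eq:gamma-ell-x} the stabilized section $f_j^{\alpha,\beta}(w)\Vert_x = f_j^{\alpha,\beta}(w)^{\ell_x}|_x$ collects exactly those section-contributions landing at the $\ell_x$ vertices $x, \gamma x, \dots, \gamma^{\ell_x-1}x$ in the orbit of $x$, where $\gamma = f_j^{\alpha,\beta}(w)$ and $\ell_x = \ell_x(\gamma) \le m_j$. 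So $\len_B(f_j^{\alpha,\beta}(w)\Vert_x) = \len_B(w)$ forces \emph{every} $B$-letter of $w$ to send its contribution into this orbit, and in particular the specific letter must produce a nontrivial section at one of these $\ell_x \le m_j$ vertices.

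Next I would examine where a single $B$-letter $b = (q,g) \in Q\times G$, sitting between two $A_j$-letters $a$ and $a'$ in the normal form, places its $\tilde B_{j+1}$-contribution on the first level. By the recursive definition of the directed automorphisms $\tilde q^\alpha_{[j]}$ and $\tilde g^\beta_{[j]}$, the element $f_j^{\alpha,\beta}(b)$ acts trivially on $X_j$, has section $q_{j+1}$ at $x = \alpha_j = s$, section $g_{j+1}$ at $x = \beta_j = t$, section $\tilde q^\alpha_{[j+1]}\tilde g^\beta_{[j+1]}$ at $x = o$, and trivial section elsewhere. Conjugating by the surrounding rooted letters moves these special vertices to $a^{-1}\{o,s,t\}$ or similar. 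Hence the $B$-letter's contribution is supported on the $A_j$-translate of $\{o,s,t\}$ — at most three vertices, and only the translates of $s$ and $t$ (not $o$, which only produces further directed elements down the tree) are relevant for a stabilized section that \emph{stops} having full $B$-length only finitely far down; in any case, for the contribution to land in the orbit of a fixed $x$ we need one of these $\le 3$ vertices (really the two "$s$" and "$t$" vertices, since the "$o$"-vertex case is what propagates) to lie in $\{x,\gamma x,\dots,\gamma^{\ell_x-1}x\}$. This pins down $s$ or $t$ to one of $\ell_x \le m_j$ values once $x$ and the group element are fixed. Summing: for each of the $\le \len_B(w)$ many $B$-letters and each choice of whether it is the $s$-coordinate or the $t$-coordinate that is constrained, we get at most $m_j$ admissible values of that coordinate, while the other coordinate ranges freely over $Y_j$ or $Y_j'$. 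This yields at most $\len_B(w)\,m_j\,(|Y_j| + |Y_j'|)$ pairs, as claimed. The case $\len_B(w) = 1$ is handled identically, reading "$\len(\,\cdot\,\Vert_x) > (1,0)$" as "the unique $B$-letter's contribution survives nontrivially into the stabilized section", which again constrains $s$ or $t$ to $\le m_j$ values.

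The main obstacle I anticipate is bookkeeping the interaction between the choice of $x$ (and hence the orbit $\{x, \gamma x, \dots\}$, which itself depends on $(\alpha,\beta)$ through $\gamma$) and the constraint on $(s,t)$: one must be careful that "$s$ lies in the orbit of $x$ under $\gamma$" is a condition cut out by at most $m_j$ values of $s$ \emph{uniformly}, even though $\gamma$ varies with $s,t$. The clean way around this is to not fix $x$ first: instead, observe directly that $\len_B(f_j^{\alpha,\beta}(w)\Vert_x) = \len_B(w)$ for \emph{some} $x$ means the given $B$-letter produces a nontrivial first-level section at \emph{some} vertex in \emph{some} $\gamma$-orbit of length $\le m_j$, i.e.\ at a vertex $v$ with $v \in \{o,s,t\}$ up to the $A_j$-action of the neighbouring letters, and that this vertex must be reachable within $m_j$ steps — but the reachability count is governed purely by $|X_j|$-independent data, namely the order of the relevant permutation in $A_j$, bounded by $m_j$. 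Phrasing it as "$s$ (or $t$) must equal one of $\le m_j$ explicitly described elements, determined by $w$'s letters and the $A_j$-action, independent of the free coordinate" makes the product bound $\len_B(w)\,m_j\,(|Y_j|+|Y_j'|)$ fall out, and this is the step that requires the most care to state without circularity. Everything else is a direct unwinding of the definitions of stabilized sections, directed automorphisms, and normal forms in the free product $F_j = A_j * (Q\times G)$.
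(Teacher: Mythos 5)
Your final count has the right shape and you correctly identify the three special first-level vertices $o$, $\alpha_j$, $\beta_j$ of a directed generator, but the logical core is misassigned, and the difficulty you yourself flag at the end (that the constraint on $(s,t)$ must be made independent of the free choice of $x \in X_j$) is exactly the gap that remains open. Recall that for $b=(q,g)$ the section of $f_j^{\alpha,\beta}(b)$ at $o$ is the directed element $\tilde q^{\alpha}_{[j+1]}\tilde g^{\beta}_{[j+1]}$, i.e.\ the one and only $B$-letter this letter contributes, whereas the sections at $\alpha_j$ and $\beta_j$ are the \emph{rooted} elements $q_{j+1}, g_{j+1} \in A_{j+1}$, i.e.\ $A$-letters. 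You invert this: you treat the $s$- and $t$-sections as the carriers of the $B$-length and dismiss the $o$-section as irrelevant, and consequently you derive the constraint on $(s,t)$ from the requirement that the $B$-contributions land in the orbit of $x$. That requirement actually constrains $x$, not $(s,t)$: writing $w = {}^{a_1}b_1\cdots{}^{a_n}b_n a$, the condition that $\tilde b_1$ contribute a $B$-letter to $\gamma\Vert_x$ reads $a_1^{-1}a^{-k}x = o$ for some $k$, i.e.\ $x \in \langle a\rangle a_1 o$, a set of at most $\ord(a) \le m_j$ vertices determined by $w$ alone. This is the missing step that makes the eventual constraint on $(\alpha_j,\beta_j)$ independent of $x$; without it the sets $a_i^{-1}\langle a\rangle x$ sweep over all of $X_j$ as $x$ varies and no bound of the form $\len_B(w)m_j(|Y_j|+|Y_j'|)$ follows. (Your worry that the orbit itself depends on $(\alpha,\beta)$ is a non-issue: $\gamma$ acts on $X_j$ as the rooted element $a$, so $\ell_x(\gamma)=\ell_x(a)$.)

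The constraint on $(\alpha_j,\beta_j)$ then comes from a separate consideration that your argument does not supply. If all $n\ge 2$ sections contributed by the $\tilde b_i$ along the orbit were $B$-letters only, they would multiply inside the subgroup $\tilde B^{\alpha,\beta}_{j+1}=\tilde Q^\alpha_{[j+1]}\tilde G^\beta_{[j+1]}$ to a single $B$-letter; so $\len_B(\gamma\Vert_x)=n$ forces at least one $\tilde b_i$ to contribute a nontrivial separating $A_{j+1}$-letter, which happens only when $a_i^{-1}a^{-k}x \in \{\alpha_j,\beta_j\}$ for some $k$, i.e.\ when $\alpha_j$ or $\beta_j$ lies in $\{a_1^{-1},\dots,a_n^{-1}\}\langle a\rangle a_1 o$, a set of at most $n\ord(a) \le nm_j$ elements now independent of $x$. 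Fixing one coordinate in this set and letting the other range over $Y_j$ resp.\ $Y_j'$ gives the claimed bound, and the case $n=1$ with $\len(\gamma\Vert_x) > (1,0)$ follows from the same $A_{j+1}$-separation observation. These two necessary conditions must be extracted in that order, from the $o$-sections and from the $\alpha_j,\beta_j$-sections respectively; your proposal derives neither correctly, so the argument does not close as written.
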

\begin{proof}
Let $\gamma = f_j^{\alpha,\beta}(w)$ and let $n = \len_B(w)$.
Then $w$ can be written in the form
$w= \ ^{a_1} b_1 \cdots \ ^{a_n} b_n a$
with $a, a_1,\dots,a_n \in A_j$ and non-trivial $b_1,\dots, b_n \in B$. 
Let $x \in X_j$. Note that $\ell \defeq \ell_x(a) = \ell_x(\gamma)$ since $\gamma$ acts as $a$ on the first level of $\T_j$.
We write $\tilde{b}_i = f^{\alpha,\beta}_j(b_i)$.
A quick calculation reveals
\begin{align*}
& \gamma\Vert_x = \gamma^\ell|_x = \bigl(\ ^{a_1} \tilde{b}_1 \cdots \ ^{a_n} \tilde{b}_n \ ^{aa_1} \tilde{b}_1 \cdots \ ^{aa_n} \tilde{b}_n \cdots \ ^{a^{\ell-1}a_1} \tilde{b}_1 \cdots \ ^{a^{\ell-1}a_n} \tilde{b}_n a^\ell\bigr)|_x \\
=\ &\tilde{b}_1|_{a_1^{-1}x} \cdots \ \tilde{b}_n|_{a_n^{-1}x} \  \tilde{b}_1|_{a_1^{-1}a^{-1}x} \cdots \ \tilde{b}_n|_{a_n^{-1}a^{-1}x} \cdots \  \tilde{b}_1|_{a_1^{-1}a^{-\ell+1}x} \cdots \ \tilde{b}_n|_{a_n^{-1}a^{-\ell+1}x}.
\end{align*}
If $\len_B(\gamma\Vert_x)$ is not smaller than the $B$-length of $w$, then elements in $\tilde{B}_{j+1}^{\alpha,\beta}$ and $A_{j+1}$ occur in an alternating way and every letter $\tilde{b}_i$ contributes one letter from $\tilde{B}_{j+1}^{\alpha,\beta}$.

Assume that $n \geq 2$ and assume that  $\len_B(\gamma\Vert_x) = \len_B(w)$.
Since $\tilde{b}_1$ contributes a $B$-letter, there is a $k$ such that $a_1^{-1}a^{-k}x = o$, i.e., $x \in \langle a \rangle a_1o$. 
In addition, at least $n-1$ letters in  $A_{j+1}$-letter occur in expression for the stabilized section.
Thus at least one of $\tilde{b}_1,\tilde{b}_2,\dots, \tilde{b}_{n}$ contributes a non-trivial element of $A_{j+1}$ and this means that $\alpha_j$ or $\beta_j$ lies in 
\[
 \{a_1^{-1},a_2^{-1},\dots,a_{n}^{-1}\}\langle a \rangle x \subseteq  \{a_1^{-1},a_2^{-1},\dots,a_{n}^{-1}\}\langle a \rangle a_1 o.
\]
We conclude that there are at most 
\[ n\ord(a)|Y_j'| + n\ord(a)|Y_j| \leq n m_j (|Y_j|+|Y'_j|)\]
 possibilities for the pair $(\alpha_j,\beta_j)$.

Assume now that $n = 1$, i.e., $w= \ ^{a_1} b_1 a$.
Using our assumption $\len(w) > (1,0)$, we see that $\tilde{b}_1$ needs to contribute an element from $A_{j+1}$ and so either $\alpha_j$ or $\beta_j$ is contained in
\[
a_1^{-1}\langle a \rangle x \subseteq a_1^{-1}\langle a \rangle a_1 o.
\]
Now this implies that there are at most $\ord(a)(|Y_j|+|Y'_j|) \leq m_j(|Y_j|+|Y'_j|)$ possibilities for $(\alpha_j,\beta_j)$.
 \end{proof}

 \begin{proof}[Proof of Proposition \ref{prop:shrinking}]
 Since $Q$ and $G$ are finitely generated, the groups $F_j = A_j*B$ are finitely generated and countable. 
 
Let $w \in F_j$ with $\len(w) > (1,0)$ be given. We define subsets $M(w)$ in $\mathcal{Y}\times \mathcal{Y}'$. If $\len_B(w) > 1$, then $M(w)$ consists of those pairs $(\alpha,\beta)$ such that for every $k \in \N$ there is a vertex $x$ of level $k$ in $\T_j$ such that
 $\len_B(f_j^{\alpha,\beta}(w))\Vert_x) = \len_B(w)$. If $\len_B(w) = 1$, we define 
 $M(w)$ to be the set of pairs  such that for every $k \in \N$ there is a vertex $x$ of level $k$ in $\T_j$ satisfying $\len_B(f_j^{\alpha,\beta}(w))\Vert_x) > (1,0)$.
 
 We note that the set of shrinking elements in $\mathcal{Y}\times \mathcal{Y}'$ is the complement of the union of all $M(w)$ over all $j$ and $w \in F_j$ with $\len(w) > (1,0)$.
 We claim that under our assumptions $M(w)$ is closed and nowhere dense. By Baire's category theorem a countable union of nowhere dense sets is nowhere dense and this proves the Proposition.
 
Write $M_k(w)$ for the set of all $(\alpha,\beta)\in \mathcal{Y}\times \mathcal{Y}'$ such that there is a vertex $x$ of level $k$ in $\T_j$ with $\len_B(f_j^{\alpha,\beta}(w))\Vert_x) = \len_B(w)$ (resp. with $\len(f_j^{\alpha,\beta}(w))\Vert_x) > (1,0)$ if $\len_B(w) = 1$).
Whether or not a pair $(\alpha,\beta)$ belongs to $M_k(w)$ depends only on the first $k$ entries of $\alpha$ and $\beta$. In particular, $M_k(w)$ is closed and open in $\mathcal{Y}\times \mathcal{Y}'$.
Since $M(w) = \bigcap_{k=1}^\infty M_k(w)$, we deduce that $M(w)$ is closed.
We claim that $M(w)$ is nowhere dense. Assume that $\len_B(w) > 1$.  Let $(\alpha,\beta) \in M(w)$ and assume for a contradiction that an open neighbourhood of $(\alpha,\beta)$ is contained in $M(w)$, i.e., there is a $\kappa \in \N$ such that every element $(\alpha',\beta')$  with $\alpha'_i = \alpha_i$ and $\beta'_i = \beta_i$ for all $i \leq \kappa+j$ lies in $M(w)$.
By assumption there is $\ell > \kappa$ such that
\[
|Y_{j+\ell}| |Y'_{j+\ell}| > \len_B(w)m_{j+\ell}(|Y_{j+\ell}|+|Y'_{j+\ell}|).
\]
Let $z = yx$ be a vertex on level $\ell+1$ of $\T_j$, where $y$ is a vertex of level $\ell$ and $x \in X_{j+\ell}$ satisfying $\len_B(f_j^{\alpha,\beta}(w)\Vert_{z}) = \len_B(w)$. Recall that $f_j^{\alpha,\beta}(w)\Vert_{z} = f_j^{\alpha,\beta}(w)\Vert_{y}\Vert_x$. 
Let $w' \in F_{j+\ell}$ be a representative of $f_j^{\alpha,\beta}(w)\Vert_{y}$ with $\len_B(w') = \len_B(w)$.
Then $\len(f_{j+\ell}^{\alpha,\beta}(w')\Vert_x) = \len_B(w) = \len_B(w')$ and hence $(\alpha_{j+\ell},\beta_{j+\ell})$ lies $Z_{j+\ell}(w')$, which by assumption is strictly smaller than $Y_{j+\ell}\times Y'_{j+\ell}$. This provides a contradiction and so $M(w)$ cannot contain any open set. 
In the case $\len_B(w) = 1$ essentially the same argument shows that $M(w)$ is nowhere dense.
\end{proof}
 
\begin{remark}
It seems possible to obtain a measure theoretic strengthening of Proposition~\ref{prop:shrinking}.
If we equip each $Y_j$ and $Y'_j$ with the uniform probability measure and equip $\mathcal{Y} \times \mathcal{Y}'$ with the product measure $\mu$, then under suitable assumptions we expect that $\mu$-almost all elements in $\mathcal{Y} \times \mathcal{Y}'$ are shrinking.
 \end{remark}

\section{Proof of the main theorem}
We first recall a simple lemma.
\begin{lemma}\label{lem:products}
A direct product of groups $G_1\times \dots \times G_k$ contains a non-abelian free group if and only if one of the groups $G_i$ contains a non-abelian free group.
\end{lemma}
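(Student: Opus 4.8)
The plan is to handle the easy implication directly and reduce the converse to the two-factor case, where one exploits the Nielsen--Schreier theorem. If some $G_i$ contains a non-abelian free group $F$, then the embedding of $G_i$ as the $i$-th direct factor carries $F$ into $G_1 \times \dots \times G_k$, which settles one direction. For the converse, writing $G_1 \times \dots \times G_k = G_1 \times (G_2 \times \dots \times G_k)$ and inducting on $k$ reduces the claim to the statement: if $F \le G_1 \times G_2$ is non-abelian free, then $G_1$ or $G_2$ contains a non-abelian free group. Here I would let $\pi_i \colon G_1 \times G_2 \to G_i$ be the coordinate projections and consider the normal subgroup $N \defeq F \cap (\{1\} \times G_2) = \ker(\pi_1|_F)$ of $F$. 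If $N$ is trivial, then $\pi_1$ embeds $F$ into $G_1$ and we are done. If $N$ is non-trivial, then $N$ is free by Nielsen--Schreier (being a subgroup of $F$) and embeds into $G_2$ (being contained in $\{1\} \times G_2$), so it only remains to see that a non-trivial normal subgroup of a non-abelian free group is non-abelian.

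To prove that last fact, suppose such an $N$ were abelian; being free, it would then be infinite cyclic, say $N = \langle z \rangle$ with $z \neq 1$. Conjugation yields a homomorphism $F \to \Aut(\langle z\rangle) \cong \{\pm 1\}$ whose kernel $C$ has index at most $2$ in $F$ and centralizes $z$. By the Nielsen--Schreier rank formula, a finite-index subgroup of a non-abelian free group is again non-abelian, so $C$ is non-abelian; but $C$ is contained in the centralizer of the non-trivial element $z$, which is cyclic in a free group. This contradiction shows $N$ is non-abelian, and the lemma follows.

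I do not anticipate a serious obstacle: the argument is routine once one is careful to extract from $G_1 \times G_2$ a subgroup of $G_2$ that is genuinely a non-abelian \emph{free} group rather than merely a non-trivial normal subgroup of $F$; this is precisely the role played by combining Nielsen--Schreier with the structure of normal subgroups of free groups.
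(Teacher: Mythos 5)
Your proof is correct, but it runs on a different engine than the paper's. The paper's argument (stated in one line) is: the kernels $\ker(\pi_1|_F)$ and $\ker(\pi_2|_F)$ are two normal subgroups of $F$ with trivial intersection, and since any two non-trivial normal subgroups of a non-abelian free group meet non-trivially (commuting non-trivial elements of a free group generate a cyclic group, which forces a non-trivial common power in the intersection), one of the kernels is trivial; hence $F$ \emph{itself} embeds into one of the two factors. You instead examine a single kernel $N = \ker(\pi_1|_F)$ and, when it is non-trivial, show that $N$ is itself a non-abelian free subgroup of $G_2$, via Nielsen--Schreier plus the fact that a non-trivial normal subgroup of a non-abelian free group cannot be abelian (your centralizer/index-formula argument for this is sound: an abelian $N$ would be infinite cyclic, its centralizer would have index at most $2$ and so be non-abelian free by the rank formula, yet sit inside the cyclic centralizer of a non-trivial element). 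The trade-off: the paper's route is shorter, avoids Nielsen--Schreier entirely, and yields the slightly stronger conclusion that some coordinate projection is injective on $F$; your route needs more machinery but only ever looks at one projection at a time, and the intermediate fact you isolate (non-trivial normal subgroups of non-abelian free groups are non-abelian free) is a reusable statement in its own right. Both handle the reduction to two factors by the same induction, and both settle the lemma.
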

\begin{proof}
This follows by induction and the observation that every pair of non-trivial normal subgroups in a non-abelian free group intersects non-trivially.
\end{proof}
Let $\Alt(n)$ denote the alternating group of degree $n$. For $k < n$ we identify $\Alt(k)$ with the subgroup of $\Alt(n)$ that permutes $\{1,2,\dots,k\}$.
\begin{lemma}\label{lem:altalt}
Let $F$ be a finite group of order $n$. There is an embedding of $F$ in $\Alt(2n+3)$ 
such that $\Alt(2n+3)$ is generated by the $F$-conjugates of $\Alt(5)$.
\end{lemma}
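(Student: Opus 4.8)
The plan is to embed $F$ into $\Alt(2n+3)$ by a cleverly chosen permutation representation and then exhibit enough $F$-conjugates of the standard $\Alt(5)$ to generate everything. First I would realize $F$ inside a symmetric group via a modified regular representation. The left regular representation $\lambda\colon F\hookrightarrow \Sym(F)=\Sym(n)$ is the natural starting point, but it need not land in $\Alt(n)$ (a generator of even order acting with odd-length... wait, acting by a fixed-point-free permutation of order $m$ decomposes into $n/m$ cycles of length $m$, which is an odd permutation exactly when $m$ is even and $n/m$ is odd). The standard fix is to act diagonally on two disjoint copies of $F$: the permutation $\mathrm{diag}(\lambda(g),\lambda(g))$ on a set of size $2n$ is always even, so we get an embedding $F\hookrightarrow\Alt(2n)\subseteq\Alt(2n+3)$, the last inclusion being the one fixing the extra three points. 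This explains the ``$2n$'' in the statement; the extra $+3$ gives us room so that $2n+3\ge 5$ and, crucially, so that we can place a copy of $\Alt(5)$ and move it around by $F$ to sweep out the whole group.

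Next I would produce the generation statement. Fix the standard $\Alt(5)$ acting on $\{1,2,3,4,5\}$ inside $\Alt(2n+3)$ (relabelling the ground set as $\{1,\dots,2n+3\}$, with $F$ acting on, say, the first $2n$ points and fixing $2n+1,2n+2,2n+3$). Let $N\le\Alt(2n+3)$ be the subgroup generated by all $F$-conjugates of this $\Alt(5)$. Since $F$ acts transitively on each of the two copies of $F$ (it is the regular action), and these orbits have size $n\ge 1$, by choosing the copies and the three fixed points appropriately one arranges that the $F$-translates of the $5$-element set $\{1,\dots,5\}$ cover all of $\{1,\dots,2n+3\}$ and, more importantly, that the associated system of $5$-element subsets is ``connected'' — any two of them can be linked by a chain of subsets each overlapping the next in at least two points. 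A subgroup of $\Alt(m)$ containing $\Alt(Y)$ and $\Alt(Y')$ for two $5$-subsets $Y,Y'$ with $|Y\cap Y'|\ge 2$ contains $\Alt(Y\cup Y')$ (a standard fact: $\Alt$ on a union along an overlap of size $\ge 2$ is generated by the two pieces, since $\Alt(k)$ is generated by $3$-cycles and one can walk a $3$-cycle across the overlap); iterating along the connecting chains shows $N\supseteq\Alt(\{1,\dots,2n+3\})$, hence $N=\Alt(2n+3)$.

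The main obstacle is the combinatorial bookkeeping in the previous paragraph: I must choose the identification of the ground set with $\{1,\dots,2n+3\}$ so that the $F$-orbits of the block $\{1,\dots,5\}$ genuinely form a connected overlap system covering everything. Here the freedom to use $2n+3$ rather than exactly $2n$ is what saves us — e.g. if $n$ is very small ($n=1,2$) one needs the padding just to have a copy of $\Alt(5)$ at all, and for general $n$ one can let the block $\{1,\dots,5\}$ straddle a fixed point or straddle the two copies of $F$ so that its $F$-translates chain together. Concretely I would order the points of the first copy of $F$ as $p_1,\dots,p_n$, the second copy as $p_{n+1},\dots,p_{2n}$ (with $F$ acting by $g\cdot p_i = p_{?}$ according to left multiplication in each block), put the three extra fixed points as $p_{2n+1},p_{2n+2},p_{2n+3}$, and take the base $\Alt(5)$ on $\{p_{2n-1},p_{2n},p_{2n+1},p_{2n+2},p_{2n+3}\}$ — two moving points and three fixed ones. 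Translating by $F$ moves the pair $\{p_{2n-1},p_{2n}\}$ over all pairs coming from an $F$-orbit structure on the second block while always keeping the three fixed points, so consecutive translates overlap in those three fixed points (overlap $\ge 2$), instantly giving a connected chain covering the second block plus the fixed points; a symmetric choice (or a further conjugate block meeting both blocks) brings in the first block. Assembling these chains yields $\Alt$ on the whole set, completing the proof. One should double-check the parity claim for the base block and that all stated inequalities ($2n+3\ge 5$, i.e.\ $n\ge 1$) hold, but these are immediate.
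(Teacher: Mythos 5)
Your overall strategy is the same as the paper's: embed $F$ by the diagonal of two copies of its regular representation on $2n$ points (which forces evenness), adjoin three fixed points, and let the $F$-conjugates of a well-placed $\Alt(5)$ sweep out the whole group. The paper finishes with the $3$-cycles $(1,2,y)$ rather than your overlapping-blocks lemma, but that difference is cosmetic. The genuine problem is your concrete choice of base block. You put $\Alt(5)$ on $\{p_{2n-1},p_{2n},p_{2n+1},p_{2n+2},p_{2n+3}\}$, i.e.\ two points of the \emph{second} regular orbit together with the three fixed points. Since $F$ preserves the decomposition of the ground set into the two regular orbits and the three fixed points, every $F$-conjugate of this block is again contained in (second orbit) $\cup$ (fixed points). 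Hence the subgroup generated by all $F$-conjugates of your $\Alt(5)$ fixes the first orbit pointwise and is contained in $\Alt(n+3)$; it can never be all of $\Alt(2n+3)$. Your parenthetical escape routes do not exist: a ``symmetric choice'' would be a second, different copy of $\Alt(5)$, which the lemma does not grant you, and ``a further conjugate block meeting both blocks'' is impossible because no $F$-conjugate of your block meets the first orbit.

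The repair is exactly the configuration the paper uses (and which your own earlier sentence about letting the block ``straddle the two copies of $F$'' points at): the base block must contain one point from \emph{each} regular orbit plus fixed points to supply the overlap --- e.g.\ $\{p_n,\,p_{2n},\,p_{2n+1},\,p_{2n+2},\,p_{2n+3}\}$ in your labelling, or $\{1,2,3,4,5\}$ in the paper's, where $1,2,5$ are $F$-fixed and $3$, $4$ lie in the two different free orbits. With that single change the $F$-translates cover all of $\{1,\dots,2n+3\}$, any two of them share the three fixed points (overlap $\ge 2$), and your chaining lemma for alternating groups on overlapping sets finishes the proof.
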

\begin{proof}
Let $F$ act freely on the sets $\{3\}\cup\{6,7,\dots,n+4\}$ and $\{4\} \cup \{n+5,n+6,\dots, 2n+3\}$. We take the diagonal action to ensure that $F$ acts on $\{1,\dots,2n+3\}$ by even permutations.
If we conjugate the $3$-cycle $(1,2,3)$ with all elements in $F$ we obtain all $3$-cycles of the form $(1,2,y)$ with $6 \leq y \leq n+4$. If we conjugate the $3$-cycle $(1,2,4)$ with all elements in $F$, we obtain all $3$-cycles of the form $(1,2,y)$ with $n+5\leq y$. It is well-known that the set of $3$-cycles $\{(1,2,y)\mid 3 \leq y \leq 2n+3\}$ generates $\Alt(2n+3)$. We deduce that the $F$-conjugates of $\Alt(5)$ generate $\Alt(2n+3)$.
 \end{proof}

Let $\mathfrak{C}$ be a class of groups that is closed under taking subgroups and finite direct products. Many prominent classes of groups meet these requirements: torsion groups, nilpotent groups, solvable groups, groups that don't contain a non-abelian free group, amenable groups, linear groups, groups satisfying a law $w$, etc.
We say that a group $\Gamma$ is \emph{locally $\mathfrak{C}$ up to powers}, if for every finite list $\gamma_1,\dots,\gamma_k \in \Gamma$, there are exponents $t_1,\dots,t_k \in \N$ such that $\langle \gamma_1^{t_1},\dots, \gamma_k^{t_k} \rangle$ is in $\mathfrak{C}$.
\begin{lemma}\label{lem:C-product}
If $G,H$ are locally $\mathfrak{C}$ up to powers, then $G \times H$ is locally $\mathfrak{C}$ up to powers.
\end{lemma}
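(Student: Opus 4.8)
The plan is to use a straightforward diagonal argument exploiting that $\mathfrak{C}$ is closed under both subgroups and finite direct products. Fix a finite list $\gamma_1,\dots,\gamma_k \in G \times H$ and write $\gamma_i = (g_i,h_i)$ with $g_i \in G$ and $h_i \in H$. First I would apply the hypothesis that $G$ is locally $\mathfrak{C}$ up to powers to the list $g_1,\dots,g_k$ to obtain exponents $s_1,\dots,s_k \in \N$ with $\langle g_1^{s_1},\dots,g_k^{s_k}\rangle \in \mathfrak{C}$, and likewise apply the hypothesis for $H$ to $h_1,\dots,h_k$ to obtain exponents $r_1,\dots,r_k \in \N$ with $\langle h_1^{r_1},\dots,h_k^{r_k}\rangle \in \mathfrak{C}$.

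Next I would set $t_i \defeq s_i r_i$ (any common multiple of $s_i$ and $r_i$ works). Then $\gamma_i^{t_i} = (g_i^{t_i}, h_i^{t_i})$, and since $g_i^{t_i} = (g_i^{s_i})^{r_i}$ lies in $\langle g_1^{s_1},\dots,g_k^{s_k}\rangle$ while $h_i^{t_i} = (h_i^{r_i})^{s_i}$ lies in $\langle h_1^{r_1},\dots,h_k^{r_k}\rangle$, we obtain the inclusion
\[
\langle \gamma_1^{t_1},\dots,\gamma_k^{t_k}\rangle \leq \langle g_1^{s_1},\dots,g_k^{s_k}\rangle \times \langle h_1^{r_1},\dots,h_k^{r_k}\rangle .
\]
The right-hand side is a direct product of two groups in $\mathfrak{C}$, hence lies in $\mathfrak{C}$; being a subgroup of it, $\langle \gamma_1^{t_1},\dots,\gamma_k^{t_k}\rangle$ lies in $\mathfrak{C}$ as well, which is exactly what is required.

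There is essentially no obstacle here: the only subtlety is that the exponents produced for the $G$-coordinates and for the $H$-coordinates need not coincide, which is handled by passing to the common multiples $t_i$. An obvious induction then extends the statement to any finite direct product $G_1 \times \dots \times G_m$ of groups that are locally $\mathfrak{C}$ up to powers, which is the form in which it will be used.
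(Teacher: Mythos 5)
Your argument is exactly the one in the paper: both obtain exponents separately for the $G$-coordinates and the $H$-coordinates, pass to the products $s_ir_i$, and observe that the resulting subgroup sits inside the direct product of the two $\mathfrak{C}$-subgroups, which lies in $\mathfrak{C}$ by the closure assumptions. The proof is correct and essentially identical to the paper's.
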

\begin{proof}
Let $\gamma_1 = (g_1,h_1),\dots,\gamma_r = (g_r,h_r) \in G\times H$. There are exponents $s_1,\dots,s_r$ and $t_1,\dots,t_r$ such that $G_1=\langle g_1^{s_1}, \dots, g_r^{s_r}\rangle$ and $H_1 = \langle h_1^{t_1}, \dots, h_r^{t_r} \rangle$ are in $\mathfrak{C}$.
The subgroup $\langle \gamma_1^{s_1t_1},\gamma_2^{s_2t_2}, \dots,\gamma_r^{s_rt_r}\rangle$ is contained in $G_1 \times H_1$ and hence lies in $\mathfrak{C}$.
\end{proof}

\begin{theorem}\label{thm:C}
Let $G$ be a finitely generated residually finite group that is locally $\mathfrak{C}$ up to powers. Then $G$ embeds into a finitely generated perfect branch group $\Gamma$ that is locally $\mathfrak{C}$ up to powers.
\end{theorem}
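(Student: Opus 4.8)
The plan is to feed $G$ into the machinery of Section~\ref{sec:stabilized} with $Q:=\Alt(5)$, choosing the alphabets $X_j$ delicately enough that the standing assumptions (A1)--(A3), the stabilizer hypothesis of Lemma~\ref{lem:branch-group-arg}, and the growth hypothesis of Proposition~\ref{prop:shrinking} hold simultaneously; then $\Gamma:=\Gamma_1^{\alpha,\beta}$ for a shrinking pair $(\alpha,\beta)$ will be the desired group. Concretely, I would assume $G$ infinite and fix a strictly descending chain $G=N_0\supsetneq N_1\supsetneq N_2\supsetneq\cdots$ of finite-index normal subgroups with $\bigcap_j N_j=1$, putting $p_j:=[G:N_j]$, so $p_j\to\infty$. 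Then $Q:=\Alt(5)$ is finite, perfect and finitely generated, giving (A1), (A2). For each $j$, Lemma~\ref{lem:altalt} applied to $G/N_j$ (of order $p_j$) yields an embedding $\iota_j\colon G/N_j\hookrightarrow\Alt(2p_j+3)$ with $\Alt(2p_j+3)$ generated by the $\iota_j(G/N_j)$-conjugates of $\Alt(5)$. I take $X_j$ to be the set of $(p_j+1)$-element subsets of $\{1,\dots,2p_j+3\}$, let $Q=\Alt(5)=\Alt(\{1,\dots,5\})\le\Alt(2p_j+3)$ act on $X_j$ by permuting subsets, and let $G$ act through $G\twoheadrightarrow G/N_j\xrightarrow{\iota_j}\Alt(2p_j+3)$ acting on subsets. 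Since $\Alt(2p_j+3)$ is simple and acts faithfully and transitively on $(p_j+1)$-subsets, we get $Q_j\cong\Alt(5)$, $\ker(G\to G_j)=N_j$, and $A_j=\langle Q_j,G_j\rangle$ is the image of $\Alt(2p_j+3)$ on $X_j$: a transitive perfect group generated by the $G_j$-conjugates of $Q_j$, which is (A3). Finally I pick some $o\in X_j$ and disjoint $Y_j,Y_j'\subseteq X_j\setminus\{o\}$ of size $\lfloor(|X_j|-1)/2\rfloor$; since $2p_j+3$ is odd and $2\le p_j+1\le 2p_j+1$, distinct $(p_j+1)$-subsets have distinct stabilizers in $\Alt(2p_j+3)$ (a subset is the unique orbit of its stabilizer of the correct size), so $o$ and any $\alpha_j\in Y_j$ have distinct stabilizers in $A_j$, as Lemma~\ref{lem:branch-group-arg} requires.

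Next I would check the growth hypothesis. Here $m_j$ is the maximal element order of $\Alt(2p_j+3)$, i.e.\ Landau's function at $2p_j+3$, which grows subexponentially: $\log m_j=O(\sqrt{p_j\log p_j})$. By contrast $|X_j|=\binom{2p_j+3}{p_j+1}$ grows exponentially in $p_j$, so $|Y_j||Y_j'|/(|Y_j|+|Y_j'|)$ is of order $|X_j|$ and eventually dwarfs $m_j$; hence for every $n$ some $j$ satisfies $|Y_j||Y_j'|>nm_j(|Y_j|+|Y_j'|)$. Proposition~\ref{prop:shrinking} then produces a shrinking pair $(\alpha,\beta)\in\mathcal Y\times\mathcal Y'$; note $\alpha_j\in Y_j$ and $\beta_j\in Y_j'$ are distinct and differ from $o$. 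Setting $\Gamma:=\Gamma_1^{\alpha,\beta}$, Lemma~\ref{lem:branch-group-arg} shows $\Gamma$ is a finitely generated, perfect branch subgroup of $\Aut(\T_1)$.

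It remains to verify the embedding and the $\mathfrak C$-property. The homomorphism $g\mapsto\tilde g^\beta_{[1]}$ from $G$ to $\Gamma$ has kernel $\bigcap_{k\ge1}\ker(G\to G_k)=\bigcap_{k\ge1}N_k=1$, so $G$ embeds into $\Gamma$. For the last claim, let $\gamma_1,\dots,\gamma_r\in\Gamma$. Since $Q$ is finite and $(\alpha,\beta)$ is shrinking, Lemma~\ref{lem:use-shrink} gives a level $k$ (taken large enough to serve all $\gamma_i$) and exponents $m_1,\dots,m_r$ with $\gamma_i^{m_i}\in\St_\Gamma(k)$ and $\gamma_i^{m_i}|_v\in\tilde G^\beta_{[k+1]}$ for every vertex $v\in X^k$. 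The section map $\St_\Gamma(k)\hookrightarrow\prod_{v\in X^k}\Aut(\T_{k+1})$ is injective and carries $H:=\langle\gamma_1^{m_1},\dots,\gamma_r^{m_r}\rangle$ into $\prod_{v\in X^k}\tilde G^\beta_{[k+1]}$. As the chain is descending, $\bigcap_{i\ge k+1}N_i=1$, so $\tilde G^\beta_{[k+1]}\cong G$ is locally $\mathfrak C$ up to powers; by Lemma~\ref{lem:C-product} so is the finite direct power $\prod_{v\in X^k}\tilde G^\beta_{[k+1]}\cong G^{|X^k|}$, and hence so is $H$, being a subgroup. Thus there are $s_1,\dots,s_r$ with $\langle\gamma_1^{m_1s_1},\dots,\gamma_r^{m_rs_r}\rangle\in\mathfrak C$, and $t_i:=m_is_i$ works.

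\textbf{Expected main obstacle.} The crux is the choice of alphabets: one needs $|X_j|$ to grow much faster than the maximal element order $m_j$ of $A_j$. The natural action of $\Alt(2p_j+3)$ on $2p_j+3$ points is useless here, since Landau's function already exceeds $2p_j+3$; replacing it by the highly transitive action on $(p_j+1)$-subsets restores the gap, while Lemma~\ref{lem:altalt} keeps $A_j$ generated by $G_j$-conjugates of $Q_j$ and the sharp transitivity keeps point stabilizers pairwise distinct. Everything else is routine bookkeeping with the lemmas of Section~\ref{sec:stabilized}.
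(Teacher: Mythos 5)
Your proposal is correct and follows the same architecture as the paper's proof: $Q=\Alt(5)$, the quotients $G/N_j$ embedded into $A_j=\Alt(2n_j+3)$ via Lemma~\ref{lem:altalt}, a shrinking pair from Proposition~\ref{prop:shrinking}, the branch property from Lemma~\ref{lem:branch-group-arg}, and the passage to powers via Lemma~\ref{lem:use-shrink} and Lemma~\ref{lem:C-product}. The one genuinely different ingredient is the choice of the alphabet on which $A_j$ acts. The paper takes $X_j=A_j/\langle\sigma_j\rangle$ for a $3$-cycle $\sigma_j$, so $|X_j|$ is of factorial size; this makes the ratio $|Y_j||Y_j'|/\bigl(m_j(|Y_j|+|Y_j'|)\bigr)$ blow up already with the elementary bound $g(n)\le n!/2^{n-1}$ on Landau's function, at the cost of having to describe $Y_j$ and $Y_j'$ as the points with stabilizer different from, respectively equal to, $\langle\sigma_j\rangle$. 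You instead act on $(n_j+1)$-element subsets, so $|X_j|=\binom{2n_j+3}{n_j+1}$ is only exponential in $n_j$, and you must invoke the genuine Landau asymptotic $\log g(n)\sim\sqrt{n\log n}$ (which the paper deliberately avoids); in exchange you get the cleaner fact that \emph{all} point stabilizers are pairwise distinct (your parenthetical argument --- a $(n_j+1)$-subset is recovered as the unique stabilizer orbit of its size, using $n_j+1\neq n_j+2$ --- is right, though calling this ``sharp transitivity'' is a misnomer), so $Y_j,Y_j'$ can be chosen arbitrarily. Both choices satisfy (A1)--(A3) and the hypotheses of Lemma~\ref{lem:branch-group-arg} and Proposition~\ref{prop:shrinking}, and your endgame (sections at level $k$ landing in $\prod_v\tilde G^\beta_{[k+1]}\cong G^t$, then Lemma~\ref{lem:C-product}) is exactly the paper's. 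The only actual omission is the case of finite $G$: your construction needs $[G:N_j]\to\infty$, so you should note, as the paper does, that a finite $G$ may first be replaced by $G\times G_2$ with $G_2$ an infinite finitely generated residually finite torsion group (any torsion group is locally $\mathfrak{C}$ up to powers, so Lemma~\ref{lem:C-product} keeps the hypothesis intact). This is a one-line fix, not a flaw in the method.
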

\begin{proof}
We can assume that $G$ is infinite (if $G$ is finite, we may replace $G$ with $G\times G_2$ where $G_2$ is a finitely generated, infinite, amenable, residually finite, torsion group; for instance, Grigorchuk's group \cite{Grigorchuk80}). We set $Q = \Alt(5)$.
Let $N_1, N_2, \dots$ be a sequence of finite index normal subgroups of $G$ such that
\begin{equation}\label{eq:intersection}
 \bigcap_{i\geq j} N_i = \{1\}
\end{equation}
holds for all $j \geq 1$.

Let $n_i = |G/N_i|$ and let $A_i = \Alt(2n_i+3)$. We apply Lemma \ref{lem:altalt} and embed $G/N_i$ into $A_i$ such that the $G/N_i$ conjugates of $\Alt(5)$ generate $A_i$. Let $\sigma_i$ be a $3$-cycle in $A_i$. We define $X_i = A_i/\langle\sigma_i\rangle$ and pick $o = \langle \sigma_i \rangle$ as the special point.
Note that $G$, $Q$ and the $A_n$ satisfy the standing assumptions (A1)-(A3) of Section~\ref{sec:stabilized}.

Let $Y_i \subseteq X_i$ be the set of elements whose stabilizer in $A_i$ is \emph{not} $\langle \sigma_i \rangle$.
Let $Y_i'$ denote the set of elements different from $o$ whose stabilizer is $\langle \sigma_i \rangle$, i.e. $Y_i' = N_{A_i}(\langle \sigma_i \rangle)/\langle\sigma_i\rangle \setminus \{o\}$. 
The normalizer $N_{A_i}(\langle \sigma_i \rangle)$ is isomorphic to the group of even permutations in $\Sym(3) \times \Sym(2n_i)$; in particular
$|Y_i'| = \frac{6 \cdot (2n_i)!}{2 \cdot 3} -1 =  (2n_i)!-1$ and $|Y_i| = \frac{(2n_i+3)!}{3}-(2n_i!)$.

Let $g\colon \N \to \N$ be Landau's function, i.e., $g(n)$ is the maximal order of an element in the symmetric group $\Sym(n)$.
We verify that the conditions of Proposition \ref{prop:shrinking} are fulfilled.
To this end we will show that $\frac{|Y_i|\cdot|Y_i'|}{g(2n_i+3) (|Y_i|+|Y_i'|)}$ tends to infinity with $n_i$.
We simplify the expression
\begin{align*}
	\frac{|Y_i|\cdot|Y_i'|}{g(2n_i+3) (|Y_i|+|Y_i'|)} &\geq \frac{|Y_i|\cdot|Y_i'|}{g(2n_i+3) |X_i|}\\
	&= (1-\frac{3(2n_i)!}{(2n_i+3)!})\frac{(2n_i)!-1}{g(2n_i+3)}\\
	&\geq \frac{19}{20} \cdot \frac{(2n_i)!-1}{g(2n_i+3)}
\end{align*}
using that $\frac{3(2n)!}{(2n+3)!}= \frac{3}{(2n+3)(2n+2)(2n+1)}$ is monotonically decreasing with $n$ and attains the value $\frac{1}{20}$ for $n = 1$. It is a result of Landau that $\log(g(n)) \sim \sqrt{n\log(n)}$. However, for our purposes the elementary estimate $g(n) \leq \frac{n!}{2^{n-1}}$ is sufficient (induction!).
Then we obtain
\begin{align*}
	\frac{|Y_i|\cdot|Y_i'|}{g(2n_i+3) (|Y_i|+|Y_i'|)} &\geq \frac{19}{20} \cdot \frac{(2n_i)!-1}{g(2n_i+3)}\\
	&\geq \frac{19 \cdot 2^{2n_i+1}}{20 (2n_i+3)(2n_i+2)(2n_i+1)}
\end{align*}
Since $G$ is infinite, condition \eqref{eq:intersection} ensures that the $n_i$ and hence the left hand side is unbounded.

By Proposition \ref{prop:shrinking} the set of shrinking elements is dense in $\mathcal{Y}\times\mathcal{Y}'$.
In particular, we can find at least one shrinking pair $(\alpha,\beta)$. By the definition of $Y_i$, the elements $\alpha_j \in Y_j$ satisfy the stabilizer condition of Lemma~\ref{lem:branch-group-arg}. We conclude that
$\Gamma_1^{\alpha,\beta}$ is a branch group.  Using \eqref{eq:intersection} it follows that the subgroup $\tilde{G}_j^\beta$ of $\Gamma_j^{\alpha,\beta}$ is isomorphic to $G$.
We have to check that $\Gamma_1^{\alpha,\beta}$ is locally $\mathfrak{C}$ up to powers.
Let $\gamma_1,\dots, \gamma_r \in \Gamma_1^{\alpha,\beta}$. 
By Lemma \ref{lem:use-shrink} there is a level $k\in \N$ and there are exponents $m_1,\dots,m_r \in \N$ such that all sections of $\gamma_i^{m_i}$ on level $k$ lie in $\tilde{G}_{k+1}^\beta$ and $\gamma_i^{m_i} \in \St_{\Gamma_1^{\alpha,\beta}}(k)$.
Since $\tilde{G}_{k+1}^\beta \cong G$ the elements $\gamma_i^{m_i}$ generate a group that is isomorphic to a subgroup of $G^t$ where $t$ denotes the number of vertices on the $k$-th level. Since $G^t$ is locally $\mathfrak{C}$ up to powers (see \ref{lem:C-product}), we conclude that $\Gamma_1^{\alpha,\beta}$ satisfies $w$ up to powers.

\end{proof}

\begin{proof}[Proof of Theorem~\ref{thm:intro-main}]
Ad \eqref{it-torsion}: Let $\mathfrak{C}$ be the class of torsion groups. Being locally $\mathfrak{C}$ up to powers is the same as being torsion, so the result follows from Theorem \ref{thm:C}.

Ad \eqref{it-free}: Let $\mathfrak{C}$ be the class of group that don't contain a non-abelian free group. By Lemma~\ref{lem:products} the class $\mathfrak{C}$ is closed under subgroups and finite direct products.
We note that a group $H$ which is locally $\mathfrak{C}$ up to powers doesn't contain a non-abelian free group. 
In fact, if $\gamma_1,\gamma_2 \in H$ generate a non-abelian free group $\langle \gamma_1, \gamma_2 \rangle$, then $\langle \gamma_1^{t_1},\gamma_2^{t_s} \rangle$ is again a non-abelian free group.
Again the assertion follows immediately from Theorem \ref{thm:C}.

Ad \eqref{it-law}: Let $\mathfrak{C}$ be the variety of groups defined by the law $w$. Satisfying $w$ up to powers is the same as being locally $\mathfrak{C}$ up to powers and Theorem \ref{thm:C} proves the claim.

Ad \eqref{it-amenable}: Assume that $G$ is amenable. As before we can see that $\Gamma$ is locally amenable up to powers. To prove amenability we need a stronger result. There are various methods to prove amenability of groups acting on rooted trees.
Here we use Theorem 22 of \cite{JuschenkoNekrashevychdelaSalle16}, which in our case tells us that $\Gamma_1^{\alpha,\beta}$ is amenable if the germs of the action of $\Gamma_1^{\alpha,\beta}$ on $\T_1$ are amenable.
Since these can be easily seen to be either trivial or isomorphic to $Q \times G$, it follows that $\Gamma_1^{\alpha,\beta}$ is amenable if $G$ is amenable.
\end{proof}

\begin{proof}[Proof of Corollary~\ref{cor:amenable-action}]
Let $\gamma \in \Gamma_1^{\alpha,\beta}$ be an element in $A_1 \cup \tilde{G}^{\beta}_{[1]} \cup \tilde{Q}^\alpha_{[1]}$.
By construction each level of $\T_1$ contains at most $2$ vertices $u,v$ such that $g$ has non-trivial sections at $u$ and $v$.
Since $\Gamma_1^{\alpha,\beta}$ is generated by $A_1 \cup \tilde{G}^{\beta}_{[1]} \cup \tilde{Q}^\alpha_{[1]}$, it follows that for every $\gamma \in \Gamma_1^{\alpha,\beta}$ the number of non-trivial sections of $\gamma$ in each level is uniformly bounded.
In this case the main theorem of~\cite{GrigorchukNekrashevych07} tells us that the natural continuous action of $\Gamma^{\alpha,\beta}_1$ on the boundary of $\T_1$ restricts to amenable and faithful actions on the orbits of this action.
\end{proof}

\begin{proof}[Proof of Additio~\ref{additio:profinite}]
Let $G$ be as in Theorem \ref{thm:intro-main}.
Let $H$ be a profinite group and assume that $G \subseteq H$ is a dense subgroup. If $G$ is finite, then $H = G$ and there is nothing left to do. We assume that $G$ is infinite.

Let $\bar{N}_i \trianglelefteq H$ be a sequence of open normal subgroups with trivial intersection such that
 $N_i = G \cap \bar{N}_i$ meets the requirements of our construction.
By \cite[Corollary 4.4]{KionkeSchesler21} $\Gamma_1^{\alpha,\beta}$ has the congruence subgroup property. This means, that every finite index subgroup of $\Gamma_1^{\alpha,\beta}$ contains a level stabilizer. The $k$-th level stabilizer $\St_{\Gamma_1^{\alpha,\beta}}(k)$ intersects $\tilde{G}^\beta_{[1]}$  exactly in $\widetilde{L(k)}_1^\beta$ where $L(k) = \bigcap_{i \leq k} N_i \trianglelefteq G$. We deduce that the profinite topology of $\tilde{G}_{[1]}^\beta$ induced by the finite index subgroups of $\Gamma_1^{\alpha,\beta}$ agrees with the profinite topology inherited from the open subgroups of $H$. 
\end{proof}

\bibliographystyle{amsplain}
\bibliography{literatur}

\end{document}